\documentclass[11pt,a4paper]{amsart}
\usepackage[T1]{fontenc}
\usepackage[utf8]{inputenc}
\usepackage{lmodern}
\usepackage[margin=27mm]{geometry}
\usepackage{amsmath,amssymb,amsthm,mathtools,mathrsfs}
\usepackage{microtype}
\usepackage{graphicx}
\usepackage[hidelinks]{hyperref}

\numberwithin{equation}{section}
\newtheorem{theorem}{Theorem}[section]
\newtheorem{proposition}[theorem]{Proposition}
\newtheorem{lemma}[theorem]{Lemma}

\theoremstyle{remark}
\newtheorem{remark}[theorem]{Remark}
\newcommand{\R}{\mathbb R}
\newcommand{\C}{\mathbb C}
\DeclareMathOperator{\sech}{sech}

\title[Stability of the GP branch in a coupled system]
{Global well-posedness and orbital stability of the
Gross--Pitaevskii branch in a coupled defocusing Schr\"odinger system}
\author{Micha{\l} Kowalczyk}
\address{Departamento de Ingenier\'{\i}a Matem\'atica and CMM,
Universidad de Chile, Casilla 170 Correo 3, Santiago, Chile}
\email{kowalczy@dim.uchile.cl}
\author{Jaros{\l}aw Mederski}
\address{Institute of Mathematics, Polish Academy of Sciences,
ul.\ \'Sniadeckich 8, 00-656 Warsaw, Poland}
\address{and Faculty of Mathematics and Computer Science,
Nicolaus Copernicus University in Toru\'n,
ul.\ Chopina 12/18, 87-100 Toru\'n, Poland}
\email{jmederski@impan.pl}
\author{Isidora Miranda}
\address{Departamento de Ingenier\'{\i}a Matem\'atica and CMM,
Universidad de Chile, Casilla 170 Correo 3, Santiago, Chile}
\email{isimimi311@gmail.com}
\thanks{The first author was supported by the Chilean grants FONDECYT
1250156 and 1210405 and ANID FB 210005.
Part of this work was done during the second author's visit to Chile
funded by the FONDECYT International Cooperation grant 1210405,
and he gratefully acknowledges the support and hospitality received
during his visit. He was also partly supported by the National Science Centre, Poland
(Grant No.\ 2025/57/B/ST1/04804). The third author was partly supported by the Chilean
grant FONDECYT 1250156.}
\subjclass[2020]{35Q55, 35B35, 35C08}
\keywords{Coupled Gross--Pitaevskii equations, nonzero boundary conditions,
global well-posedness, orbital stability, dark soliton, conserved momentum}
\date{}

\begin{document}
\begin{abstract}
We consider a one-dimensional coupled defocusing Schr\"odinger system
with a nonzero background in its first component and finite mass in
its second component. We prove global well-posedness in the full energy
space, together with conservation of energy, bright mass, and total
untwisted momentum. For all nonnegative coupling coefficients, every
subsonic Gross--Pitaevskii soliton with vanishing second component is orbitally
stable against perturbations of both components, including the
zero-speed black soliton. At nonzero speed the proof uses a fixed
energy--momentum functional augmented by the conserved bright mass.
At zero speed it uses weighted coercivity and a local real lift of the
momentum. We also identify the sharp threshold for positivity of the
unaugmented bright energy and explain why this threshold is not a
necessary condition for orbital stability.
\end{abstract}
\maketitle

\section{Introduction and main result}\label{sec:introduction}

We study
\begin{equation}\label{eq:system}
\begin{aligned}
i\psi_t&=\psi_{xx}+\psi(1-|\psi|^2-\alpha|\phi|^2),\\
i\phi_t&=\phi_{xx}+\phi(1-\alpha|\psi|^2-\beta|\phi|^2),
\qquad (t,x)\in\R\times\R.
\end{aligned}
\end{equation}
For global well-posedness and stability of the GP branch we assume
\begin{equation}\label{eq:parameters}
\alpha\ge0,\qquad \beta\ge0.
\end{equation}
No positive-definiteness assumption on the interaction matrix is needed
for these results. The explicit dark--bright family in
Section~\ref{sec:dark-bright-open} will be considered under the stronger
conditions $\alpha>1$ and $\beta>\alpha^2$, stated separately there.
We refer to
\eqref{eq:system} as a coupled Gross--Pitaevskii or Manakov-type system;
no integrable structure is assumed.

For every $|c|<\sqrt2$, the pair $(U_c(x-ct),0)$ is a solution, where
\begin{equation*}
U_c(x)=R_c(x)+iI_c
 =\gamma_c\tanh\!\left(\frac{\gamma_c x}{\sqrt2}\right)
      -\frac{ic}{\sqrt2},
\qquad \gamma_c=\sqrt{1-\frac{c^2}{2}}.
\end{equation*}
The case $c=0$ is the black soliton. The second component of this
branch is identically zero, but the perturbations considered below
are not required to have this property.

\subsection{Energy space and distances}
Set
\[
\mathcal E=\{u\in H^1_{\mathrm{loc}}(\R;\C):
           u_x\in L^2(\R),\ 1-|u|^2\in L^2(\R)\},
\qquad
\mathcal X=\mathcal E\times H^1(\R;\C).
\]
The Cauchy theory will use the complete metric
\begin{equation}\label{eq:energy-metric}
\begin{aligned}
d_{\mathcal X}((\psi,\phi),(\widetilde\psi,\widetilde\phi))
={}&\|\psi-\widetilde\psi\|_\infty
  +\|\psi_x-\widetilde\psi_x\|_2\\
&+\big\||\psi|^2-|\widetilde\psi|^2\big\|_2
  +\|\phi-\widetilde\phi\|_{H^1}.
\end{aligned}
\end{equation}
Every $u\in\mathcal E$ is bounded and satisfies $|u(x)|\to1$ as
$|x|\to\infty$; these facts are proved in Section~\ref{sec:gwp}.
We do not assume that its phase has limits at infinity.

The conserved energy and bright mass are
\begin{align}
H(\psi,\phi)
={}&\frac12\int(|\psi_x|^2+|\phi_x|^2)
    +\frac14\int(1-|\psi|^2)^2
    +\frac12\int(\alpha|\psi|^2-1)|\phi|^2
    +\frac\beta4\int|\phi|^4,\label{eq:energy}\\
M(\phi)={}&\frac12\int|\phi|^2.\label{eq:mass}
\end{align}
All integrals without an indicated domain are over $\R$.
Theorem~\ref{thm:gwp} proves global well-posedness and conservation
of $H$ and $M$. Proposition~\ref{prop:momentum} treats the total
untwisted momentum, including dark components with zeros.

For the orbital statement define
\begin{equation}\label{eq:weighted-norm}
Q_c=1-|U_c|^2
 =\gamma_c^2\sech^2\!\left(\frac{\gamma_c x}{\sqrt2}\right)
 =\sqrt2 R_c',
\qquad
\|\epsilon\|_{H_c^1}^2=\|\epsilon_x\|_2^2+\int Q_c|\epsilon|^2.
\end{equation}
Here the weighted Hilbert space is
\[
 H_c^1=\bigl\{f\in H^1_{\mathrm{loc}}(\R;\C):
       f_x\in L^2(\R),\ \sqrt{Q_c}\,f\in L^2(\R)\bigr\},
\]
equipped with the norm in \eqref{eq:weighted-norm}. Since $Q_c$ has
a positive lower bound on each compact interval, a Cauchy sequence
in this norm converges locally in $H^1$; its derivative and weighted
$L^2$ limits identify the limit in $H_c^1$, proving completeness.
Define the centered and orbital distances by
\begin{align}
\mathfrak d_c(\psi,\phi)
 &=\|\psi-U_c\|_{H_c^1}
      +\big\||\psi|-|U_c|\big\|_2+\|\phi\|_{H^1},\nonumber\\
d_c^{\mathrm{orb}}(\psi,\phi)
 &=\inf_{a,\theta\in\R}
   \mathfrak d_c(e^{-i\theta}\psi(\cdot+a),\phi(\cdot+a)).
   \label{eq:orbit-distance}
\end{align}
The weighted norm is evaluated after centering the solution at the
profile. In particular, \eqref{eq:orbit-distance} is not an assertion
of stability in the global $L^\infty$ phase metric
\eqref{eq:energy-metric}.

\paragraph{\bf{Comparison of the flow and orbital distances.}}
For each fixed $|c|<\sqrt2$, the flow metric is strictly stronger
than the centered distance used in the stability statement. More
precisely, there is $C_c>0$ such that, for every $(\psi,\phi)\in\mathcal X$,
\[
 d_c^{\mathrm{orb}}(\psi,\phi)
 \le\mathfrak d_c(\psi,\phi)
 \le C_c\,d_{\mathcal X}\bigl((\psi,\phi),(U_c,0)\bigr).
\]
This estimate also holds at $c=0$. Indeed, with $\epsilon=\psi-U_c$,
\[
 \|\epsilon\|_{H_c^1}
 \le\|\epsilon_x\|_2+\|Q_c\|_1^{1/2}\|\epsilon\|_\infty.
\]
Choose $R,m>0$ such that $|U_c(x)|\ge m$ for $|x|\ge R$.
On $[-R,R]$ use the reverse triangle inequality; on its complement
factor the difference of squared moduli. This gives
\[
 \big\||\psi|-|U_c|\big\|_2
 \le\sqrt{2R}\|\psi-U_c\|_\infty
     +m^{-1}\big\||\psi|^2-|U_c|^2\big\|_2,
\]
which proves the comparison without a smallness assumption.

The converse estimate fails because the weighted distance permits a
slowly varying phase far from the soliton core. To see this explicitly,
choose a smooth real function $\chi$ with $\chi(s)=0$ for $s\le1$
and $\chi(s)=\pi$ for $s\ge2$, and set
\[
 \psi_n(x)=e^{i\chi(x/n)}U_c(x),\qquad \phi_n=0,\qquad n\ge1.
\]
These functions belong to $\mathcal X$ and satisfy
$|\psi_n|=|U_c|$. Moreover,
\[
\begin{aligned}
 \|(\psi_n-U_c)_x\|_2
 &\le2\|U_c'\|_{L^2((n,\infty))}
       +n^{-1/2}\|\chi'\|_2\longrightarrow0,\\
 \int Q_c|\psi_n-U_c|^2
 &\le4\int_n^\infty Q_c\longrightarrow0.
\end{aligned}
\]
Here $|U_c|\le1$ was used in the derivative estimate. Consequently
\[
 \mathfrak d_c(\psi_n,0)\longrightarrow0,
 \qquad
 d_{\mathcal X}\bigl((\psi_n,0),(U_c,0)\bigr)
 \ge\|\psi_n-U_c\|_\infty=2.
\]
Even optimizing the stronger distance over translations and a
constant phase does not restore convergence. For every $a,\theta\in\R$,
the limiting discrepancies at the two ends of the line give
\[
 \|e^{-i\theta}\psi_n(\cdot+a)-U_c\|_\infty
 \ge\max\{|e^{-i\theta}-1|,\ |e^{-i\theta}+1|\}
 \ge\sqrt2.
\]

Thus global well-posedness in $d_{\mathcal X}$ and orbital stability
in $d_c^{\mathrm{orb}}$ are distinct but compatible assertions.
The former provides the global solution and its strong time
continuity; the latter controls its centered energy perturbation,
without asserting global uniform control of the complex phase.
Continuous dependence on initial data in the weaker topology does
not follow merely from continuous dependence in $d_{\mathcal X}$.
Finally, the quantity $D_c$ in
Proposition~\ref{prop:augmented-coercivity} uses the density defect
$\||\psi|^2-|U_c|^2\|_2$ instead of the modulus defect.
Lemma~\ref{coer:metric} shows that the complete distances $D_c$ and
$\mathfrak d_c$ are locally equivalent, also at $c=0$; this does
not assert equivalence of the two defect terms by themselves.

\begin{theorem}[Orbital stability of the GP branch]\label{thm:stability}
Assume \eqref{eq:parameters} and fix $c\in(-\sqrt2,\sqrt2)$.
There exist $\delta_c,C_c>0$, depending also on $\alpha,\beta$, such
that every $(\psi_0,\phi_0)\in\mathcal X$ with
$\delta=d_c^{\mathrm{orb}}(\psi_0,\phi_0)<\delta_c$ gives a global
solution satisfying
\begin{equation*}
\sup_{t\in\R}d_c^{\mathrm{orb}}(\psi(t),\phi(t))\le C_c\delta.
\end{equation*}
For $c\ne0$, this solution also satisfies
$\inf_{t,x}|\psi(t,x)|\ge |c|/(2\sqrt2)$.
No estimate uniform up to the sonic endpoints is asserted.
\end{theorem}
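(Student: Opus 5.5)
The proof is a Lyapunov (Grillakis--Shatah--Strauss type) argument built on conserved quantities. Theorem~\ref{thm:gwp} and Proposition~\ref{prop:momentum} supply the global solution together with conservation of $H$, $M$ and the untwisted momentum $P$. The Lyapunov functional for $c\ne0$ is
\[
\mathcal L_c(\psi,\phi)=H(\psi,\phi)-c\,P(\psi,\phi)+\mu M(\phi),
\]
with $\mu>0$ a fixed large constant depending on $c,\alpha,\beta$.

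\textbf{Step 1: variational structure.} The $\psi$-part of $H-cP$ has $U_c$ as a critical point, since the traveling-wave equation is the Euler--Lagrange equation. The $\phi$-part is quadratic at $\phi=0$, with linearization
\[
\tfrac12\int\bigl(|\phi_x|^2+(\alpha|U_c|^2-1)|\phi|^2\bigr)-c\,P_\phi.
\]
Adding $\mu M$ makes this coercive in $H^1$. This is exactly the role of the augmentation by the bright mass, and it explains why no sign condition beyond $\alpha,\beta\ge0$ is needed.

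\textbf{Step 2: coercivity of the dark part.} The dark part is the scalar GP problem. Its second variation is coercive on $H^1_c$ modulo the kernel directions $\partial_xU_c$ and $iU_c$, and modulo one negative direction. For $c\ne0$ the negative direction is handled by the standard $dP/dc\ne0$ condition, which holds on the subsonic branch. I will take Proposition~\ref{prop:augmented-coercivity} as providing
\[
\mathcal L_c(\psi,\phi)-\mathcal L_c(U_c,0)\ \ge\ \kappa_c D_c^2-C\bigl(|P(\psi,\phi)-P(U_c,0)|+D_c^3\bigr)
\]
after modulation. Here $D_c$ is the density-defect distance, which Lemma~\ref{coer:metric} shows is locally equivalent to $\mathfrak d_c$.

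\textbf{Step 3: modulation and continuity argument.} The implicit function theorem in the $H^1_c$ topology yields $a(t)$ and $\theta(t)$ making $\epsilon=e^{-i\theta}\psi(\cdot+a)-U_c$ orthogonal to $\partial_xU_c$ and $iU_c$. The functionals $H$, $M$ and $P$ are conserved and locally Lipschitz in $\mathfrak d_c$ near the orbit. Hence
\[
\kappa_cD_c(t)^2\le C\delta+C\delta^2+CD_c(t)^3 .
\]
Time continuity of $D_c(t)$ is inherited from continuity in $d_{\mathcal X}$ via the comparison estimate. A bootstrap then closes as long as $D_c$ stays small. Getting a linear bound $C_c\delta$ rather than $\sqrt\delta$ requires more: one must use that $U_c$ is critical, so the first variation vanishes, and that the momentum defect enters only through its differences, which are quadratic. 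The lower bound on $|\psi|$ for $c\ne0$ follows from $|U_c|\ge|c|/\sqrt2$ together with the one-dimensional Sobolev estimate $\|\epsilon\|_{L^\infty}\lesssim\|\epsilon\|_{H^1_c}$ for $\epsilon$ near the core. Far from the core, $|\psi|^2-|U_c|^2$ is small in $H^1$.

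\textbf{Main obstacle: $c=0$.} Here $|U_0|$ vanishes, so the momentum is not a smooth functional of a lifted phase globally, and the $dP/dc$ mechanism degenerates. I would follow the abstract's route. Weighted coercivity of the Hessian of $H$ at the black soliton holds on $H^1_0$ modulo the kernel and the negative direction. The negative direction is controlled by a local real lift of $P$: away from the core, write $\psi=\rho e^{i\varphi}$ and express $P$ through $\int(1-\rho^2)\varphi_x$, truncated near $x=0$. This gives a functional that is conserved up to controlled errors and Lipschitz in $\mathfrak d_0$. The delicate point is showing that this lifted momentum pins the speed parameter quadratically, so that the coercivity loss is absorbed. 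I expect this step to be the hardest, and I would first try to prove the needed inequality by an explicit computation using the $\tanh$ profile.
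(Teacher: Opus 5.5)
Your overall architecture agrees with the paper's: a conserved functional augmented by the bright mass, the momentum used to neutralize the negative direction, and a separate zero-speed argument. However, the step that yields the linear rate $C_c\delta$ is missing at both speeds.

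\textbf{Nonzero speed.} The inequality you take in Step~2 contains $|P-p_c|$ to the first power. Since $DP$ does not vanish at the profile, $P(\psi_0,\phi_0)-p_c=O(\delta)$, so Step~3 gives only $D_c\lesssim\delta^{1/2}$. Your remedy is not right as stated: the momentum defect is first order, only its square is second order, and it must enter in that form.

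The paper does this by putting $K_c(P-p_c)^2$, with $2K_c>1/p_c'$ and $p_c'=\sqrt{2-c^2}$, into the conserved functional $\mathcal L_c=H+cP+\Lambda_cM+K_c(P-p_c)^2$. Note the sign $+cP$; in this convention $H-cP$ is not critical at $U_c$. Using $B_c(W_c^{\rm par},\cdot)=-\ell_c$ and $q_{{\rm d},c}(W_c^{\rm par})=-p_c'$, Lemma~\ref{nz:scalar-coercivity} makes the Hessian coercive on the translation slice alone. Criticality then gives $\mathcal L_c(\psi_0,\phi_0)-\mathcal L_c(U_c,0)\le C\delta^2$, hence $\|w(t)\|_Y\le C\delta$.

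Proposition~\ref{prop:augmented-coercivity} is not the right input here. It concerns $H+2M$ on a slice containing $\int R_c'\epsilon_2=0$, and this condition cannot be imposed with $a,\theta$ alone, because both parameter derivatives of $\int R_c'\epsilon_2$ vanish at the profile.

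Moreover, for $\epsilon\in H_c^1$ the quadratic part $\frac12\int\operatorname{Im}(\bar\epsilon\epsilon_x)$ of $P$ is undefined, since $\epsilon$ need not lie in $L^2$. This is why the paper works in the hydrodynamic chart $(r,v,z)$. There $P$ is smooth on $Y$ and the phase is quotiented out automatically, and Lemma~\ref{nz:bridge} transfers the bounds back to $\mathfrak d_c$.

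\textbf{Zero speed.} You correctly say that the lifted momentum must pin the speed quadratically, but the step is left open and the diagnosis is off. The derivative $dP/dc$ does not degenerate, since $p'(0)=\sqrt2$, and this nondegeneracy is exactly what is used. The actual obstruction is that $U_0$ vanishes, which forces modulation of the speed.

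The missing argument runs as follows.
\begin{itemize}
\item Choose $(a,c,\theta)$ so that the three conditions \eqref{black:orthogonality} hold. Then $\kappa D(t)^2\le J(\psi(t),\phi(t))-h(c(t))$ with $J=H+2M$.
\item Since $h(c)=\frac13(2-c^2)^{3/2}$ is maximal at $c=0$, conservation of $J$ gives $D(t)^2\le C\delta^2+Cc(t)^2$.
\item Use the expansion \eqref{black:momentum-expansion}, whose linear term $-\int R_c'\epsilon_2$ vanishes on the slice. Combine it with exact conservation of the local lift, which is a continuous real function with constant class modulo $\pi$. This gives $|c(t)-c(0)|\le C(D(t)^2+D(0)^2)$.
\item Substituting yields $D^2\le C\delta^2+CD^4$, and the bootstrap closes.
\end{itemize}
Both quadratic vanishings, of $h(0)-h(c)$ and of the momentum error, are needed for the rate $C\delta$.

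The estimate you need is the remainder bound in \eqref{black:momentum-expansion} for arbitrary perturbations in $\mathcal V_0$, which the paper takes from Gravejat--Smets. It does not follow from the profile values $p(c)$ and $p'(c)$ computed from the $\tanh$ formula.
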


The proof separates $c\ne0$ from $c=0$. At nonzero speed we construct
a coercive conserved functional from energy, total momentum, and
bright mass. At zero speed we retain the exact variation of
$H(U_{c(t)},0)$ along the modulated branch and absorb it using the
momentum slope near $c=0$. Proposition~\ref{coer:threshold} identifies
the sharp threshold for coercivity of the unaugmented energy on the
constraint slice; this threshold is not required for orbital stability.

\subsection{Relation to previous work}
The scalar one-dimensional Cauchy theory with nonvanishing data is
developed by Gallo \cite{Gallo2004,Gallo2008}. The extended linear
flow and moving-background construction used here are closely
related to G\'erard's energy-space approach
\cite[Lemma~3, Proposition~2.3 and Remark~3.2]{Gerard2006}.
His main global theorem concerns dimensions two and three; we prove
the required one-dimensional coupled result directly.

Scalar orbital stability at nonzero speed is part of the
energy--momentum theory developed by Lin \cite{Lin2002} and
B\'ethuel--Gravejat--Smets
\cite[Proposition~4 and Section~2.1]{BGS2014};
see also \cite[Theorem~1]{BGS2015}.
The scalar Lyapunov functional with a squared momentum correction
and its quantitative stability estimate are developed by Chiron
\cite[Theorem~12 and Appendix~A]{Chiron2013}.
Black-soliton orbital stability was established by
B\'ethuel--Gravejat--Saut--Smets \cite{BGSS2008}.
Our zero-speed argument uses the weighted framework and local
momentum expansion of Gravejat--Smets
\cite[Propositions~4--5]{GS2015}.
We do not claim these scalar results or the Cauchy-problem techniques
as new. The nonzero-speed scalar coercivity is reproduced below
in amplitude variables, and the coupled conservation laws and
bright-mass augmentation are verified explicitly.

L\'opez-Mart\'inez \cite{Lopez2026} constructs dark--bright waves
variationally using prescribed momentum and bright mass.
The present paper instead studies stability of the branch
$(U_c,0)$ against two-component perturbations. It does not establish
the stability of travelling waves with a nonzero bright profile.

\begin{remark}[Open problem: stability of nontrivial dark--bright waves]
\label{rem:dark-bright-open}
Under the additional assumptions $\alpha>1$ and $\beta>\alpha^2$,
consider travelling waves, allowing a
uniform phase rotation of the bright component, of the form
\[
 (\psi(t,x),\phi(t,x))
   =\bigl(U(x-ct),e^{i\nu t}V(x-ct)\bigr),
 \qquad (U,V)\in\mathcal X,\quad V\not\equiv0.
\]
Determine which such waves are orbitally stable, and which are
unstable, modulo translations and independent constant phase rotations
of the two components. Theorem~\ref{thm:stability} concerns only
the orbit of $(U_c,0)$ and does not settle this question.
Section~\ref{sec:dark-bright-open} gives an explicit family
$(U_{d,c},V_{b,c})$, discusses its conserved action, and formulates
the corresponding open problem in a precise energy topology.
\end{remark}

This paper is organized as follows. Section~\ref{sec:gwp} gives the Cauchy theory.
Section~\ref{sec:coercivity} proves the weighted coercivity estimates
and distinguishes them from positivity of the unaugmented energy.
Section~\ref{sec:momentum} establishes momentum conservation.
Sections~\ref{sec:nonzero} and \ref{sec:black} prove
Theorem~\ref{thm:stability}.
Section~\ref{sec:dark-bright-open} discusses the nontrivial dark--bright
family and the remaining stability problem.

\section{Global well-posedness}\label{sec:gwp}

We first construct the flow on the full energy space. The argument follows
the scalar nonzero-background Cauchy theory of Gallo
\cite{Gallo2004,Gallo2008}; see also G\'erard \cite{Gerard2006} for the
scalar energy-space approach. We give the coupled argument explicitly:
the first component has a nonzero background, whereas the second belongs to
$H^1(\R)$. Only the one-dimensional Sobolev algebra property is needed for
the local construction. Throughout this section we assume
\eqref{eq:parameters}, as in the stability theorem.

For $U=(\psi,\phi)\in\mathcal X$, write
\begin{equation}\label{gwp:size}
 \Lambda(U)=\|\psi\|_\infty+\|\psi_x\|_2
       +\|1-|\psi|^2\|_2+\|\phi\|_{H^1}.
\end{equation}
The first term is finite. Indeed, every $u\in\mathcal E$ satisfies
\begin{equation}\label{gwp:embedding}
 \|u\|_\infty\le C\left(1+
       \|u_x\|_2^{1/3}\|1-|u|^2\|_2^{1/3}\right).
\end{equation}
To see this without assuming boundedness beforehand, put $A=\|u_x\|_2$.
If $A=0$, the energy condition makes $u$ a constant of modulus one.
Otherwise, for any $x_0\in\R$ and $m=|u(x_0)|$, the estimate
$|u(x)-u(x_0)|\le A|x-x_0|^{1/2}$ gives $|u(x)|\ge m/2$ on
$|x-x_0|\le m^2/(4A^2)$. For $m\ge\sqrt8$, integration on that interval
gives $\|1-|u|^2\|_2^2\ge m^6/(128A^2)$. Taking the supremum proves
\eqref{gwp:embedding}. Moreover, $q=1-|u|^2$ belongs to $H^1$,
since $q_x=-2\operatorname{Re}(\bar u u_x)\in L^2$.
Thus $q(x)\to0$ as $|x|\to\infty$, and $|u(x)|\to1$.
In particular, the functionals in
\eqref{eq:energy} and \eqref{eq:mass} are finite on $\mathcal X$ and
continuous in $d_{\mathcal X}$.

We shall use two elementary properties of this space. First,
\begin{equation}\label{gwp:affine-inclusion}
 \mathcal E+H^1(\R)\subset\mathcal E,
\end{equation}
as follows by expanding $|u+h|^2$ and using
$H^1(\R)\subset L^\infty(\R)$. Second,
$(\mathcal X,d_{\mathcal X})$ is complete. A Cauchy sequence has uniformly
convergent dark components, $L^2$-convergent derivatives and density
defects, and $H^1$-convergent bright components. Distributional
differentiation and local uniform convergence identify the first three
limits as $\psi$, $\psi_x$, and $1-|\psi|^2$, respectively.

Let $S(t)=e^{-it\partial_x^2}$ be the free Schr\"odinger group. It is
convenient to extend it first to
\[
 X^1=\{u\in L^\infty(\R):u_x\in L^2(\R)\}.
\]
For $u\in X^1$, define
\begin{equation}\label{gwp:extended-group}
 S_E(t)u=u+A_tu,\qquad
 \widehat{A_tu}(\xi)=
       \frac{e^{it\xi^2}-1}{i\xi}\widehat{u_x}(\xi),
\end{equation}
where the multiplier is set equal to zero at $\xi=0$.

\begin{theorem}[Global energy-space flow]\label{thm:gwp}
Suppose that $\alpha,\beta\ge0$. For every
$U_0=(\psi_0,\phi_0)\in\mathcal X$, system \eqref{eq:system} has a
unique global mild solution $U\in C(\R;\mathcal X)$ satisfying
\begin{equation}\label{gwp:solution-class}
 \psi-S_E(\cdot)\psi_0\in C(\R;H^1(\R)),\qquad
 \phi-S(\cdot)\phi_0\in C(\R;H^1(\R)).
\end{equation}
For every $T>0$, the solution map is continuous from
$(\mathcal X,d_{\mathcal X})$ to $C([-T,T];\mathcal X)$, with the
uniform metric. Moreover,
\begin{equation*}
 H(U(t))=H(U_0),\qquad M(\phi(t))=M(\phi_0)
 \quad(t\in\R).
\end{equation*}
No nonvanishing condition or phase limit at spatial infinity is imposed
on $\psi_0$.
\end{theorem}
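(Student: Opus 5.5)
We write the solution as $\psi=S_E(t)\psi_0+w$ and $\phi=S(t)\phi_0+v$, and solve for $(w,v)\in C([-T,T];H^1\times H^1)$ by a fixed point argument applied to the Duhamel formulation
\[
 w(t)=-i\int_0^t S(t-s)\,F_1(\psi,\phi)(s)\,ds,\qquad
 v(t)=-i\int_0^t S(t-s)\,F_2(\psi,\phi)(s)\,ds.
\]
Here $F_1=\psi(1-|\psi|^2-\alpha|\phi|^2)$ and $F_2=\phi(1-\alpha|\psi|^2-\beta|\phi|^2)$. The sign convention follows \eqref{eq:system}.

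The first step is to record the properties of the extended group. $S_E(t)$ maps $X^1$ into $X^1$, and $\partial_x S_E(t)u=S(t)u_x$. The multiplier bound $|e^{it\xi^2}-1|/|\xi|\le\min(2/|\xi|,\,|t||\xi|)$ gives $\|A_tu\|_{L^2}\le C(1+|t|)^{1/2}\|u_x\|_2$, so $A_tu\in H^1$ and $t\mapsto A_tu$ is continuous into $H^1$. Combined with \eqref{gwp:affine-inclusion}, this shows that $S_E(t)\psi_0+w\in\mathcal E$ whenever $w\in H^1$.

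The nonlinearity $F_1$ does not lie in $L^2$ on its own. We rewrite it as $F_1=\psi\,q-\alpha\psi|\phi|^2$ with $q=1-|\psi|^2$. Then $\psi q\in H^1$, because $\psi\in L^\infty$, $\psi_x\in L^2$ and $q\in H^1$. Here $q=1-|S_E\psi_0|^2-2\mathrm{Re}(\overline{S_E\psi_0}\,w)-|w|^2$, and the term $1-|S_E(t)\psi_0|^2\in H^1$ must be shown to be continuous in $t$. This uses $1-|\psi_0|^2\in L^2$ together with $A_t\psi_0\in H^1$. With these facts, the map $(w,v)\mapsto$ Duhamel integral is Lipschitz on balls of $C([-T,T];H^1\times H^1)$, with constants depending on $\Lambda(U_0)$. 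This relies only on the algebra property of $H^1(\R)$ and on $H^1\subset L^\infty$. The contraction yields local existence and uniqueness, with an existence time $T_*$ depending only on an upper bound for $\Lambda(U_0)$. Continuous dependence in $d_{\mathcal X}$ follows from the same estimates, once we note that $\psi_0\mapsto S_E(t)\psi_0-\psi_0$ is Lipschitz from $d_{\mathcal X}$ into $H^1$, locally uniformly in $t$.

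Next we prove conservation of $H$ and $M$. We approximate the data by smooth data of the form $(\psi_0^n,\phi_0^n)$ with $\psi_0^n-\psi_0$ regularized in $H^1$ and the background smoothed, for instance by mollification, which converges in $d_{\mathcal X}$. For such data we have $H^2$-type regularity of the perturbation. We then differentiate $H$ and $M$ in time, integrate by parts, and use the decay of $\psi_x$, $q$ and $\phi$ to discard boundary terms. Finally we pass to the limit using the continuity of $H$ and $M$ in $d_{\mathcal X}$ already noted after \eqref{gwp:embedding}.

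\textbf{Globalization} is the step I expect to be the main obstacle, since $H$ is not obviously coercive: the term $-\frac12\int|\phi|^2$ is negative. We control it with the conserved mass. Because $\alpha,\beta\ge0$, we have
\[
 \tfrac12\|\psi_x\|_2^2+\tfrac12\|\phi_x\|_2^2+\tfrac14\|q\|_2^2\le H(U_0)+M(\phi_0).
\]
Thus $\|\psi_x\|_2$, $\|q\|_2$ and $\|\phi\|_{H^1}$ stay bounded. Then \eqref{gwp:embedding} bounds $\|\psi\|_\infty$, so $\Lambda(U(t))$ remains uniformly bounded. Since the local existence time depends only on this bound, iterating the local result gives a global solution. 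The mild formulation is preserved across restarts because $S_E(t)S_E(s)=S_E(t+s)$ on $X^1$, and this identity can be checked on the Fourier side of $u_x$.

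The delicate points lie in the local theory. One must verify that $t\mapsto S_E(t)\psi_0$ is continuous in $d_{\mathcal X}$. In particular, continuity in $L^\infty$ and continuity of $|S_E\psi_0|^2-|\psi_0|^2$ in $L^2$ both follow from the $H^1$ continuity of $A_t\psi_0$ and the embedding $H^1\subset L^\infty$.
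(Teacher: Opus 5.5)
Your construction follows the paper: the affine decomposition $U=(S_E(t)\psi_0,S(t)\phi_0)+W$ with a contraction in $C([-T,T];H^1\times H^1)$, conservation through mollified data, and globalization using the nonnegative combination $H+M$. There is, however, a genuine gap in the conservation step. That step, not globalization, is the delicate one. You write that for mollified data ``we have $H^2$-type regularity of the perturbation'', but you do not say on which time interval. A contraction in a higher-order affine chart gives a lifespan depending on $\Lambda$ together with $R_2=\|\psi_x\|_{H^1}+\|\phi\|_{H^2}$ of the data. For mollified data, $R_2(U_0^n)\to\infty$ as the mollification parameter tends to zero. As written, you therefore get conservation for the approximating solutions only on intervals $[0,T_n]$ that may shrink to $\{0\}$. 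Passing to the limit then says nothing about the rough solution on any fixed interval, and your globalization argument, which relies on conservation, does not start.

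The missing ingredient is persistence of regularity: the higher norm of the correction stays finite on every interval on which $\Lambda$ is bounded. The paper obtains this in three steps.
\begin{itemize}
\item It uses bounds that are linear in the top norm, $\|N(U)\|_{H^2\times H^2}\le C_R(1+R_2)$ when $\Lambda(U)\le R$, and the analogue at order three. Products of first derivatives, such as $\psi|\psi_x|^2$ or $\psi_x\bar\phi\phi_x$, are controlled by $\|v\|_4^2\le C\|v\|_2^{3/2}\|v_x\|_2^{1/2}$ for $v=\psi_x,\phi_x$.
\item It applies Gronwall to the differentiated Duhamel formula.
\item Uniqueness in the energy class identifies these regular solutions with the energy-space solutions.
\end{itemize}
With this, conservation holds on the whole regular lifespan, and the bound on $H+M$ makes the regular solutions global. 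Iterating local continuous dependence on a compact interval inside the maximal lifespan of the rough solution then transfers conservation to it. Only after that can blow-up of $\Lambda$ be excluded. The paper takes $\psi_{0x}\in H^2$ and $\phi_0\in H^3$ so that $\psi_t,\phi_t\in H^1$ and the chain rule applies in the affine $H^1$ chart. At your order two you must instead pair $F_\psi\in L^2$ with $\psi_t\in L^2$.

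Two smaller omissions remain. First, uniqueness in the full class \eqref{gwp:solution-class}, not just in the contraction ball, needs the remark that any other mild solution has a correction vanishing at $t=0$. It therefore lies in the ball on a shorter interval. Second, continuity into $C([-T,T];\mathcal X)$ for arbitrary $T$ requires iterating your local dependence estimate along the global solution, with uniform bounds on $\Lambda$ for nearby data.
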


\begin{proof}
\emph{The linear flow and affine charts.}
The inequality
\[
 \frac{|e^{it\xi^2}-1|}{|\xi|}
 \le\min\{|t||\xi|,2|\xi|^{-1}\}\le\sqrt{2|t|}
\]
and unitarity on $L^2$ imply
\begin{equation}\label{gwp:linear-increment}
 \|A_tu\|_2\le\sqrt{2|t|}\|u_x\|_2,\qquad
 \|(A_tu)_x\|_2\le2\|u_x\|_2,
 \qquad \|A_tu\|_{H^1}\longrightarrow0\quad(t\to0).
\end{equation}
For the last assertion use
$(A_tu)_x=(S(t)-I)u_x$ and strong continuity on $L^2$.
Thus $S_E(t)$ maps $X^1$ into itself and, by
\eqref{gwp:affine-inclusion}, maps $\mathcal E$ into itself.
It agrees with $S(t)$ on the subspace $H^1\subset X^1$.
The multiplier identity
$A_{t+s}u=A_tu+S(t)A_su$ proves the group law.
The same definition gives the free equation in distributions.

Let $d_E$ denote the sum of the three dark-component terms in
$d_{\mathcal X}$ and put
$\Lambda_E(u)=\|u\|_\infty+\|u_x\|_2+\|1-|u|^2\|_2$.
Expansion of squared moduli and \eqref{gwp:linear-increment} give
\begin{equation}\label{gwp:chart-estimates}
 \begin{aligned}
 d_E(S_E(t)u,S_E(t)v)&\le C_{R,T}d_E(u,v),\qquad |t|\le T,\\
 d_E(u+h,v+k)&\le C_R\bigl(d_E(u,v)+\|h-k\|_{H^1}\bigr).
 \end{aligned}
\end{equation}
In the first estimate assume $\Lambda_E(u)+\Lambda_E(v)\le R$;
in the second assume
$\|u\|_\infty+\|v\|_\infty+\|h\|_{H^1}+\|k\|_{H^1}\le R$.
For example, the density difference in the second estimate is
\[
 |u+h|^2-|v+k|^2=|u|^2-|v|^2
 +2\operatorname{Re}\bigl(\overline u(h-k)
                    +(\overline u-\overline v)k\bigr)
 +|h|^2-|k|^2.
\]
These estimates also show that $S_E(t)$ is strongly continuous in $d_E$.

\emph{Local existence and continuation.}
Write the nonlinearity as
\[
 N(\psi,\phi)=
 \begin{pmatrix}
 (1-|\psi|^2-\alpha|\phi|^2)\psi\\
 (1-\alpha|\psi|^2-\beta|\phi|^2)\phi
 \end{pmatrix}.
\]
It maps $\mathcal X$ into $H^1\times H^1$ and satisfies
\begin{equation}\label{gwp:nonlinear-estimate}
 \|N(U)\|_{H^1\times H^1}\le C_R,\qquad
 \|N(U)-N(V)\|_{H^1\times H^1}
       \le C_Rd_{\mathcal X}(U,V)
\end{equation}
when $\Lambda(U),\Lambda(V)\le R$. Here and below constants may depend
on the fixed coefficients $\alpha,\beta$. To verify these estimates,
use the product inequality
\[
 \|fg\|_{H^1}\le C(\|f\|_\infty+\|f_x\|_2)\|g\|_{H^1},
 \qquad f\in X^1,\quad g\in H^1,
\]
and
\[
 \begin{aligned}
 \bigl\||\psi|^2-|\widetilde\psi|^2\bigr\|_{H^1}
 \le{}&\bigl\||\psi|^2-|\widetilde\psi|^2\bigr\|_2\\
 &+C_R\bigl(\|\psi-\widetilde\psi\|_\infty
                 +\|\psi_x-\widetilde\psi_x\|_2\bigr).
 \end{aligned}
\]
In particular, $1-|\psi|^2\in H^1$. The expression
$(1-|\psi|^2)\psi$ must be kept grouped: its two expanded terms need
not separately belong to $L^2$.

Put $U_L(t)=(S_E(t)\psi_0,S(t)\phi_0)$ and seek $U=U_L+W$.
On the closed unit ball of
$Y_T=C([-T,T];H^1\times H^1)$ define
\begin{equation*}
 \mathcal T W(t)=-i\int_0^t
 \begin{pmatrix}S(t-s)&0\\0&S(t-s)\end{pmatrix}
 N(U_L(s)+W(s))\,ds.
\end{equation*}
If $\Lambda(U_0)\le R$, then \eqref{gwp:linear-increment} and
\eqref{gwp:chart-estimates} bound the intrinsic size of $U_L+W$
uniformly for $|t|\le1$. Unitarity and
\eqref{gwp:nonlinear-estimate} therefore give
\[
 \|\mathcal TW\|_{Y_T}\le TC_R,\qquad
 \|\mathcal TW-\mathcal T\widetilde W\|_{Y_T}
       \le TC_R\|W-\widetilde W\|_{Y_T}.
\]
Choose $T=T(R)>0$ with $T\le1$ and $TC_R\le1/2$.
The contraction theorem supplies a fixed point, and
\eqref{gwp:affine-inclusion} gives $U\in C([-T,T];\mathcal X)$.
The fixed-point equation is precisely the mild formulation of
\eqref{eq:system}. Every other mild solution in
\eqref{gwp:solution-class} has a correction vanishing at the initial
time and hence lies in the same unit ball on a shorter interval.
Iteration gives uniqueness on every common interval of existence.

For two initial data of bounded intrinsic size,
\eqref{gwp:chart-estimates} and \eqref{gwp:nonlinear-estimate} similarly
give
\[
 \|W-\widetilde W\|_{Y_T}
 \le TC_R\|W-\widetilde W\|_{Y_T}
             +C_Rd_{\mathcal X}(U_0,\widetilde U_0).
\]
Absorbing the first term proves local continuous dependence in the full
metric, not only for data in one affine $H^1$ chart.
Restarting the construction yields a maximal interval $(-T_-,T_+)$ and
the continuation alternative
\begin{equation}\label{gwp:continuation}
 T_+<\infty\quad\Longrightarrow\quad
 \limsup_{t\uparrow T_+}\Lambda(U(t))=\infty,
\end{equation}
and its negative-time counterpart. Indeed, a uniform bound permits restarting with the same lifespan
$T(R)$ arbitrarily close to an endpoint.

\emph{Regular solutions and conservation.}
We justify conservation before using it for rough solutions.
For initial data with $\psi_{0x}\in H^2$ and $\phi_0\in H^3$, the same
construction applies in affine $H^3\times H^3$ charts. The free
increments belong to $H^3$, and the product rule makes $N$ locally
Lipschitz in these charts. More generally, the local construction at
order $m=2,3$ has a lifespan depending on $\Lambda+R_m$, where
\[
 R_m=\|\psi_x\|_{H^{m-1}}+\|\phi\|_{H^m}.
\]
Uniqueness in the energy class identifies these regular solutions with
the previously constructed solutions.

For completeness, the requisite persistence estimates at orders two
and three do not require a bound on $\psi_x$ in $L^\infty$ from the
energy alone. On a set where $\Lambda\le R$, the product rule gives
\begin{equation*}
 \begin{aligned}
 \|N(U)\|_{H^2\times H^2}&\le C_R(1+R_2),\\
 \|N(U)\|_{H^3\times H^3}&\le C_{R,R_2}(1+R_3).
 \end{aligned}
\end{equation*}
At order two, products of first derivatives are controlled by
\[
 \|v\|_4^2\le C\|v\|_2^{3/2}\|v_x\|_2^{1/2},
 \qquad v=\psi_x\ \hbox{or}\ \phi_x.
\]
The undifferentiated density defect is already bounded in $L^2$.
At order three, first derivatives are bounded in $L^\infty$ by $R_2$;
terms containing a second derivative and a first derivative are treated
in $L^2\cdot L^\infty$, and the highest derivative occurs linearly.
Applying the unitary estimates to the differentiated Duhamel equations
and then Gronwall's inequality proves that $R_2$, followed by $R_3$,
remains bounded on every finite interval where $\Lambda$ is bounded.

For these regular solutions, $\psi_t,\phi_t\in H^1$.
The energy is continuously differentiable on the affine
$H^1\times H^1$ chart, and its real gradients are
\[
 F_\psi=-\psi_{xx}-(1-|\psi|^2-\alpha|\phi|^2)\psi,
 \qquad
 F_\phi=-\phi_{xx}-(1-\alpha|\psi|^2-\beta|\phi|^2)\phi.
\]
Both belong to $L^2$; system \eqref{eq:system} gives
$\psi_t=iF_\psi$ and $\phi_t=iF_\phi$. The chain rule and weak
integration by parts thus yield
\[
 \frac{dH}{dt}=\operatorname{Re}\int
       (F_\psi\overline{\psi_t}+F_\phi\overline{\phi_t})=0,
 \qquad
 \frac{dM}{dt}=\operatorname{Re}\int\phi_t\overline\phi=0.
\]
All pairings are integrable. Equivalently, one may insert compact
cutoffs before integrating by parts; the derivative errors are bounded
by $CR^{-1}\|\psi_x\|_2\|\psi_t\|_2$ and the analogous bright term,
and vanish as the cutoff radius $R$ tends to infinity.

The useful nonnegative conserved combination is
\begin{equation}\label{gwp:positive-energy}
 \begin{aligned}
 \mathcal K(U):=H(U)+M(\phi)
 ={}&\frac12\int(|\psi_x|^2+|\phi_x|^2)
       +\frac14\int(1-|\psi|^2)^2\\
    &+\frac\alpha2\int|\psi|^2|\phi|^2
       +\frac\beta4\int|\phi|^4.
 \end{aligned}
\end{equation}
Writing $\mathcal K_0=\mathcal K(U_0)$ and $M_0=M(\phi_0)$, conservation gives
\begin{equation}\label{gwp:energy-bounds}
 \|\psi_x(t)\|_2^2+\|\phi_x(t)\|_2^2\le2\mathcal K_0,
 \qquad \|1-|\psi(t)|^2\|_2\le2\sqrt{\mathcal K_0},
 \qquad \|\phi(t)\|_2^2=2M_0.
\end{equation}
Together with \eqref{gwp:embedding}, these bounds control $\Lambda$
uniformly on the regular lifespan. The persistence estimates above and
the regular local theory then show that regular solutions are global.

\emph{Approximation and global continuation for arbitrary data.}
Let $\rho_\varepsilon$ be a standard mollifier. Although $u\in\mathcal E$
need not be in $L^2$, the fundamental theorem of calculus gives
\[
 \|u(\cdot-y)-u\|_2\le |y|\|u_x\|_2,
 \qquad \|u(\cdot-y)-u\|_\infty\le |y|^{1/2}\|u_x\|_2.
\]
Consequently
\begin{equation*}
 \|\rho_\varepsilon*u-u\|_2\le C\varepsilon\|u_x\|_2,
 \qquad
 \|\rho_\varepsilon*u-u\|_\infty
                    \le C\varepsilon^{1/2}\|u_x\|_2.
\end{equation*}
The derivatives converge in $L^2$, and
\[
 \bigl\||\rho_\varepsilon*u|^2-|u|^2\bigr\|_2
 \le(\|\rho_\varepsilon*u\|_\infty+\|u\|_\infty)
                            \|\rho_\varepsilon*u-u\|_2\longrightarrow0.
\]
Mollification of both components therefore approximates every datum in
$d_{\mathcal X}$ by data having the regularity just used.

Fix a compact interval $I$ inside the maximal lifespan of the rough
solution. Iteration of the local dependence estimate on finitely many
common intervals shows convergence of the corresponding regular
solutions in $C(I;\mathcal X)$. Continuity of $H$ and $M$ transfers their
conservation to the rough solution on $I$. Thus conservation is proved
on its maximal interval before global existence is inferred.
The bounds \eqref{gwp:energy-bounds} and \eqref{gwp:embedding} now apply
there and exclude a finite endpoint in \eqref{gwp:continuation}.
Finally, nearby initial data have uniformly bounded $\mathcal K_0$ and $M_0$.
Iteration of the local dependence estimate therefore gives continuity
of the global flow on every fixed compact time interval.
\end{proof}

\begin{remark}\label{gwp:scope}
The local theory above allows arbitrary real $\alpha,\beta$.
Their nonnegativity is used only in \eqref{gwp:positive-energy}.
The proof does not assert that energy controls the $L^2$ norm of a dark
perturbation relative to a fixed background. Instead, the local flow
can be restarted using the intrinsic size \eqref{gwp:size}, controlled
by energy and bright mass. Conservation of total untwisted momentum
is established separately below and is not used to construct this flow.
\end{remark}

\section{Constrained energy coercivity}\label{sec:coercivity}

In this section \eqref{eq:parameters} is assumed. Write
$\gamma=\gamma_c$ and $U_c=R_c+iI_c$, and set
\[
\eta=|U_c+\epsilon|^2-|U_c|^2
    =2(R_c\epsilon_1+I_c\epsilon_2)+|\epsilon|^2,
\qquad \zeta=|\omega|^2.
\]
The real energy gradient at $(U_c,0)$ is $(icR_c',0)$.
Consequently, direct expansion gives
\begin{equation}\label{coer:expansion}
\begin{aligned}
H(U_c+\epsilon,\omega)-H(U_c,0)
={}&c\int R_c'\epsilon_2+\frac12q_c(\epsilon_1)
       +\frac12q_c(\epsilon_2)+\frac12m_c(\omega)\\
 &+\frac14\int\eta^2+\frac\alpha2\int\eta\zeta
       +\frac\beta4\int\zeta^2,
\end{aligned}
\end{equation}
where
\[
q_c(u)=\int|u_x|^2-\int Q_cu^2,\qquad
m_c(\omega)=\int|\omega_x|^2
                 +\int(\alpha-1-\alpha Q_c)|\omega|^2.
\]
We use the constraint slice
\begin{equation}\label{coer:constraints}
\int R_c'\epsilon_1=0,\qquad
\int R_c'\epsilon_2=0,\qquad
\int R_cQ_c\epsilon_2=0.
\end{equation}
In particular, the linear term in \eqref{coer:expansion} vanishes
on this slice. It is not claimed that $(U_c,0)$ is an unconstrained
critical point of $H$ when $c\ne0$.

\subsection{A weighted spectral estimate}
\begin{lemma}\label{coer:weighted}
Let
\[
\langle u,v\rangle_c=\int Q_cuv,\qquad
e_1=\frac{R_c}{(\int R_c^2Q_c)^{1/2}}.
\]
For real $u\in H_c^1$ with $\int Q_cu=0$,
\begin{equation}\label{coer:gap}
q_c(u)\ge
2\left(\int Q_cu^2-|\langle u,e_1\rangle_c|^2\right)\ge0.
\end{equation}
If also $\int Q_cR_cu=0$, then $q_c(u)\ge2\int Q_cu^2$.
\end{lemma}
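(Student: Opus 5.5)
The plan is to reduce \eqref{coer:gap} to the spectral theory of the Legendre operator, using the change of variables that turns $Q_c\,dx$ into Lebesgue measure on $(-1,1)$. Put $y=\gamma x/\sqrt2$ and $s=\tanh y\in(-1,1)$, so that $Q_c=\gamma^2(1-s^2)$ and $R_c=\gamma s$. A direct computation, with $dx=(\sqrt2/\gamma)\,ds/(1-s^2)$ and $u_x=(\gamma/\sqrt2)(1-s^2)u_s$, gives
\[
\int|u_x|^2\,dx=\frac{\gamma}{\sqrt2}\int_{-1}^1(1-s^2)|u_s|^2\,ds,\qquad
\int Q_cu^2\,dx=\sqrt2\,\gamma\int_{-1}^1u^2\,ds .
\]
Hence $\int u_x^2\big/\int Q_cu^2=\tfrac12\,\mathcal L(u)\big/\|u\|_{L^2(-1,1)}^2$, where $\mathcal L(u)=\int_{-1}^1(1-s^2)u_s^2\,ds$ is the Legendre Dirichlet form. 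The weighted inner product $\langle\cdot,\cdot\rangle_c$ is a constant multiple of the $L^2(-1,1)$ inner product, and $H_c^1$ corresponds exactly to the space
\[
W=\Bigl\{u\in L^2(-1,1):\ (1-s^2)^{1/2}u_s\in L^2(-1,1)\Bigr\}.
\]

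Next, expand $u$ in the normalized Legendre polynomials $P_n$, which form an orthonormal basis of $L^2(-1,1)$. Since $-\bigl((1-s^2)P_n'\bigr)'=n(n+1)P_n$, I will show that for every $u\in W$,
\[
\mathcal L(u)=\sum_n n(n+1)|a_n|^2,\qquad a_n=\langle u,P_n\rangle_{L^2}.
\]
Granting this, the two orthogonality conditions are read off as follows. The condition $\int Q_cu=0$ becomes $a_0=0$. The function $e_1$ is a multiple of $s$, hence of $P_1$, so $|\langle u,e_1\rangle_c|^2$ is the weighted mass of the $P_1$ component. Returning to $x$ and using the normalization of $e_1$,
\[
\int u_x^2\ \ge\ \tfrac12\cdot2\,|\langle u,e_1\rangle_c|^2+\tfrac12\cdot6\Bigl(\int Q_cu^2-|\langle u,e_1\rangle_c|^2\Bigr).
\]
Subtracting $\int Q_cu^2$ yields $q_c(u)\ge2\bigl(\int Q_cu^2-|\langle u,e_1\rangle_c|^2\bigr)$, which is \eqref{coer:gap}; nonnegativity is Bessel's inequality. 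If in addition $\int Q_cR_cu=0$, then $a_1=0$ as well, and the same computation gives $q_c(u)\ge2\int Q_cu^2$.

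The main obstacle is justifying the Parseval identity for $\mathcal L$ on all of $W$, rather than only on polynomials. I would handle this by proving that smooth functions on $[-1,1]$, and hence polynomials, are dense in $W$ for the norm $\mathcal L(u)+\|u\|_2^2$; here no boundary condition is imposed. Near $s=\pm1$ the weight $1-s^2$ vanishes to first order, so the endpoints have zero capacity. I would therefore first cut off with logarithmic cutoffs $\chi_k(s)$ that equal $1$ away from the endpoints and vanish near them, with $\int(1-s^2)|\chi_k'|^2\to0$. The product terms $\int(1-s^2)u^2|\chi_k'|^2$ still need control, since $u$ may grow logarithmically at the endpoints; I would handle them with a weighted Hardy-type inequality, or alternatively by first truncating $u$ in amplitude, which is continuous in $W$. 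After that, I would mollify the cut-off functions. With density in hand, the identity $\mathcal L(p)=\sum n(n+1)|a_n|^2$ for polynomials, obtained by integrating by parts with no boundary terms because $1-s^2$ vanishes at $\pm1$, extends to $W$ by lower semicontinuity and Fatou's lemma. Only the resulting inequality $\mathcal L(u)\ge\sum n(n+1)|a_n|^2$ is actually needed above.
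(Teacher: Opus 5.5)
Your proof is correct and has the same skeleton as the paper's proof: the substitution $s=\tanh(\gamma x/\sqrt2)$, the Legendre expansion, and the count $\tfrac12n(n+1)-1=0$ for $n=1$ and $\ge2$ for $n\ge2$. The only difference is how the inequality $\mathcal L(u)\ge\sum_n n(n+1)a_n^2$ is justified on the full form domain.

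The paper never approximates $u$. It integrates by parts against the smooth $P_n$ on compact subintervals of $(-1,1)$ and then passes to the endpoints. The boundary term $(1-s^2)uP_n'$ tends to zero because $|u(x)|\le C_u(1+|x|^{1/2})$, while $1-s^2=\sech^2(\gamma x/\sqrt2)$ decays exponentially in $x$. Equivalently, $|u|=O(\sqrt{\log(1/(1-|s|))})$ sits against a linearly vanishing weight. This gives $\int_{-1}^1(1-s^2)u_sP_n'=n(n+1)a_n$, and Bessel's inequality for the orthonormal family $\sqrt{1-s^2}\,P_n'/\sqrt{n(n+1)}$, $n\ge1$, finishes.

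Your density route also works: zero capacity of $\pm1$, logarithmic cutoffs applied after amplitude truncation (which makes $\int(1-s^2)u^2|\chi_k'|^2\le M^2\int(1-s^2)|\chi_k'|^2\to0$), then mollification, Weierstrass approximation and Fatou. It buys the full Parseval identity and a density statement of independent interest. It is, however, considerably longer than needed. The growth bound you mention already lets you integrate by parts directly, because the rough function only ever appears paired with the bounded $P_n'$.
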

\begin{proof}
Set $y=\tanh(\gamma x/\sqrt2)$ and $w(y)=u(x)$. Then
\[
\int Q_cu^2=\sqrt2\gamma\int_{-1}^1w^2,\qquad
q_c(u)=\sqrt2\gamma\left(
 \frac12\int_{-1}^1(1-y^2)|w'|^2-\int_{-1}^1w^2\right).
\]
Let $P_n$ be the $L^2(-1,1)$-normalized Legendre polynomials.
They form a complete orthonormal basis and solve
$-((1-y^2)P_n')'=n(n+1)P_n$.
The integration-by-parts identity
\[
 \int_{-1}^1(1-y^2)w'P_n'
   =n(n+1)\int_{-1}^1wP_n
\]
holds on the full form domain
$w\in L^2(-1,1)$, $\sqrt{1-y^2}\,w'\in L^2(-1,1)$.
Indeed, local Sobolev control and the fundamental theorem of calculus
give $|u(x)|\le C_u(1+|x|^{1/2})$.
In the variable $x$, the factor $1-y^2$ decays exponentially and
$P_n'(y)$ is bounded. Thus the boundary terms
$(1-y^2)wP_n'$ tend to zero at both endpoints. Both integrals are
finite by Cauchy--Schwarz, so integration on compact subintervals
followed by passage to the endpoints proves the identity.
For $n\ge1$, the functions
$\sqrt{1-y^2}P_n'/\sqrt{n(n+1)}$ are orthonormal.
Writing $a_n=\int_{-1}^1wP_n$, Bessel's inequality gives
\[
 \int_{-1}^1(1-y^2)|w'|^2
       \ge\sum_{n\ge1}n(n+1)|a_n|^2.
\]
Orthogonality to $1$ gives $a_0=0$; the coefficient of $|a_1|^2$
in the quadratic form is zero, and all coefficients for $n\ge2$
are at least $2$. The normalized image of $P_1$ is $e_1$.
This proves both assertions without requiring boundary values of $w$.
\end{proof}

\subsection{The dark-component estimate}
\begin{lemma}\label{coer:dark}
Fix a compact interval $K\subset(-\sqrt2,\sqrt2)$ and $0<a\le1/4$.
There exist $b,C>0$, uniform for $c\in K$, such that every
$(U_c+\epsilon,0)\in\mathcal X$ satisfying \eqref{coer:constraints}
obeys
\begin{equation}\label{coer:dark-bound}
\frac12q_c(\epsilon_1)+\frac12q_c(\epsilon_2)
      +a\|\eta\|_2^2
\ge b\big(\|\epsilon\|_{H_c^1}^2+\|\eta\|_2^2\big)
              -C\|\epsilon\|_{H_c^1}^3.
\end{equation}
\end{lemma}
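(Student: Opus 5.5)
The plan is to combine the weighted spectral gap of Lemma~\ref{coer:weighted} for each real component with the density defect $\eta$, which controls the one degenerate direction. Since $Q_c=\sqrt2R_c'$, the first two constraints in \eqref{coer:constraints} say exactly that $\int Q_c\epsilon_1=\int Q_c\epsilon_2=0$. Together with the third constraint, Lemma~\ref{coer:weighted} therefore gives
\[
 q_c(\epsilon_2)\ge2\int Q_c\epsilon_2^2,\qquad
 q_c(\epsilon_1)\ge2\Bigl(\int Q_c\epsilon_1^2-|\langle\epsilon_1,e_1\rangle_c|^2\Bigr)\ge0 .
\]
The only direction not controlled by the quadratic forms is the component of $\epsilon_1$ along $e_1\propto R_c$. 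This direction is not in the kernel of the full functional, because $\epsilon_1=tR_c$ produces $\eta\approx2tR_c^2$, which is not small in $L^2$.

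\textbf{Step 1: upgrading the weighted bounds to $H_c^1$ bounds.} For $u$ equal to $\epsilon_2$, or to $\epsilon_1$ with its $e_1$-part accounted for, write
\[
q_c(u)=\theta q_c(u)+(1-\theta)q_c(u)\ge\theta\|u_x\|_2^2-\theta\int Q_cu^2+(1-\theta)\cdot2(\cdots).
\]
With $\theta=1/2$, say, this gives
\[
q_c(\epsilon_1)+q_c(\epsilon_2)\ge\kappa\|\epsilon\|_{H_c^1}^2-C_0\,|\langle\epsilon_1,e_1\rangle_c|^2 .
\]
The constants are uniform for $c\in K$, since $\gamma_c$ stays in a compact subset of $(0,1]$ and the change of variables in Lemma~\ref{coer:weighted} only rescales by $\sqrt2\gamma_c$.

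\textbf{Step 2: controlling the $e_1$-coefficient by $\eta$.} This is the main step. From $2R_c\epsilon_1=\eta-2I_c\epsilon_2-|\epsilon|^2$ and Cauchy--Schwarz with the weight $Q_c$,
\[
|\langle\epsilon_1,R_c\rangle_c|^2\le\|Q_c\|_1\int Q_cR_c^2\epsilon_1^2
\le C\Bigl(\|Q_c\|_\infty\|\eta\|_2^2+\int Q_c\epsilon_2^2+\int Q_c|\epsilon|^4\Bigr).
\]
The term $\int Q_c\epsilon_2^2$ is already controlled by $q_c(\epsilon_2)$. For the quartic term I use the pointwise bound $|\epsilon(x)|^2\le C\|\epsilon\|_{H_c^1}^2(1+|x|)$, obtained from the fundamental theorem of calculus on $[0,x]$ together with local control near the origin from the weight. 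Since $Q_c$ decays exponentially, this gives
\[
\int Q_c|\epsilon|^4\le C\|\epsilon\|_{H_c^1}^4 .
\]

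\textbf{Step 3: combining and removing the quartic term.} Take a small multiple $\mu$ of the inequality in Step~2 and add it to Step~1, using part of $a\|\eta\|_2^2$ to absorb $\mu C\|\eta\|_2^2$ and keeping the rest for the $b\|\eta\|_2^2$ term on the right. Choosing $\mu$ small relative to $\kappa$ and to $q_c(\epsilon_2)$, and then $b\le\min\{\kappa/4,a/2\}$, yields
\[
\text{LHS}\ge b\bigl(\|\epsilon\|_{H_c^1}^2+\|\eta\|_2^2\bigr)-C\|\epsilon\|_{H_c^1}^4 .
\]
To replace the quartic remainder by the cubic one, I split according to the size of $\|\epsilon\|_{H_c^1}$. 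If $\|\epsilon\|_{H_c^1}\le1$, then the quartic term is bounded by the cubic one. If $\|\epsilon\|_{H_c^1}>1$, the left side is at least $a\|\eta\|_2^2\ge b\|\eta\|_2^2$, because both $q_c$ terms are nonnegative by Lemma~\ref{coer:weighted}. Enlarging $C$ so that $b\|\epsilon\|_{H_c^1}^2\le C\|\epsilon\|_{H_c^1}^3$ in this regime then makes \eqref{coer:dark-bound} hold trivially.

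I expect Step~2 to be the delicate point. The issue is that $\epsilon$ need not be bounded, since $H_c^1$ allows growth like $|x|^{1/2}$, so the nonlinear part of $\eta$ must be handled with the exponential weight rather than with an $L^\infty$ bound.
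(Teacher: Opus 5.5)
Your ingredients match the paper's: the Legendre gap of Lemma~\ref{coer:weighted} for both components, and control of the $e_1$-coefficient $X=|\langle\epsilon_1,e_1\rangle_c|^2$ through $\eta$ via the weighted Cauchy--Schwarz bound $X\le\Gamma_c\int Q_cR_c^2\epsilon_1^2$, with $\Gamma_c=3/\gamma^2$. Your Step~2 is fine. Your quartic-to-cubic case split is also valid: for $\|\epsilon\|_{H_c^1}>1$, nonnegativity of both $q_c$ terms gives the claim once $C\ge b$. The paper instead keeps the exact expansion of $\eta^2$, discards the nonnegative $|\epsilon|^4$, and bounds the cross term by $4|\epsilon|^3$, which gives the cubic remainder directly. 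Either route works.

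The gap is in Step~3, where the weights are the wrong way round. Write $S$ for the left side of \eqref{coer:dark-bound}. Used at full strength with $\theta=1/2$, Step~1 gives $S\ge\frac14\|\epsilon\|_{H_c^1}^2-\frac12X+a\|\eta\|_2^2$, and the deficit $-\frac12X$ has a fixed coefficient of order one. A \emph{small} multiple $\mu$ of Step~2 controls only $\mu X$, so it cannot cancel this term. Cancelling it with Step~2 at the required strength costs about $\frac12\cdot\Gamma_c\cdot\frac34\|Q_c\|_\infty\|\eta\|_2^2=\frac98\|\eta\|_2^2$ with the constants your Step~2 produces. Only $a\|\eta\|_2^2\le\frac14\|\eta\|_2^2$ is available.

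This is structural rather than a matter of constants. Combine $q_c(\epsilon_1)\ge2(\int Q_c\epsilon_1^2-X)$ with $q_c(\epsilon_1)\ge\|\epsilon_{1x}\|_2^2-\int Q_c\epsilon_1^2$ in any way. The resulting coefficient of $\int Q_c\epsilon_1^2$ is never larger than the accompanying $X$-deficit, and $X$ can only be paid for out of $a\|\eta\|_2^2$.

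The repair is to apply the coercive Step~1 bound with a small weight $\vartheta$ only. On the remaining weight $1-\vartheta$, keep just $q_c(\epsilon_1)\ge0$, $q_c(\epsilon_2)\ge2\int Q_c\epsilon_2^2$ and $a\|\eta\|_2^2$. Take $\vartheta$ sufficiently small compared with $a$, uniformly on $K$. Then Step~2, applied to a small fixed fraction of $(1-\vartheta)a\|\eta\|_2^2$, pays for $\vartheta X$, and its $\int Q_c\epsilon_2^2$ cost is absorbed by $(1-\vartheta)\int Q_c\epsilon_2^2$.

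This is exactly the paper's convex combination, with $\vartheta\le a/(2(a+\Gamma_c))$. It yields $b$ of order $\vartheta$, depending on $a$ and $\gamma_c$, rather than $b=\min\{\kappa/4,a/2\}$.
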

\begin{proof}
Denote the left-hand side by $S$, and set
\[
A=\int R_c^2Q_c\epsilon_1^2,\qquad
W_j=\int Q_c\epsilon_j^2,\qquad N=\|\epsilon\|_{H_c^1}.
\]
The weighted norm controls $H^1(-1,1)$ uniformly for $c\in K$.
The fundamental theorem of calculus therefore gives
$|\epsilon(x)|\le C_K(1+|x|^{1/2})N$, and the exponential decay
of $Q_c$ implies
\begin{equation}\label{coer:cubic}
\int Q_c|\epsilon|^3\le C_KN^3.
\end{equation}
Using $0<Q_c\le1$, estimate one half of $a\int\eta^2$ against $Q_c$.
To keep track of the quartic term, first write the exact identity
\[
 \eta^2=4(R_c\epsilon_1+I_c\epsilon_2)^2
       +4(R_c\epsilon_1+I_c\epsilon_2)|\epsilon|^2+|\epsilon|^4.
\]
Applying $(X+Y)^2\ge X^2/2-Y^2$ only to the first square,
with $X=R_c\epsilon_1$ and $Y=I_c\epsilon_2$, and using $|U_c|\le1$,
we obtain
\[
 \eta^2\ge
 2R_c^2\epsilon_1^2-4I_c^2\epsilon_2^2-4|\epsilon|^3+|\epsilon|^4.
\]
We now discard the nonnegative quartic term and use
\eqref{coer:cubic} to control the weighted cubic remainder.
Lemma~\ref{coer:weighted} gives
$q_c(\epsilon_1)\ge0$ and $q_c(\epsilon_2)/2\ge W_2$, so
\begin{equation}\label{coer:first-bound}
S\ge aA+\frac12W_2+\frac a2\|\eta\|_2^2-CN^3.
\end{equation}
Here $1-2aI_c^2\ge1/2$ was used.

The factor $R_c^2$ vanishes at the origin, so the first term in
\eqref{coer:first-bound} does not itself control $W_1$.
To obtain this missing control, note that
$S\ge q_c(\epsilon_1)/2$ and apply \eqref{coer:gap}:
\[
S\ge W_1-|\langle\epsilon_1,e_1\rangle_c|^2.
\]
The remaining zero-mode coefficient is controlled by $A$, since
\begin{align*}
|\langle\epsilon_1,e_1\rangle_c|^2
 &=\frac{(\int \epsilon_1R_cQ_c)^2}{\int R_c^2Q_c}\\
 &\le\frac{\int Q_c}{\int R_c^2Q_c}\,A
   =\Gamma_cA,\qquad \Gamma_c=\frac3{\gamma^2}.
\end{align*}
Thus a second lower bound for the same quantity is
$S\ge W_1-\Gamma_cA$.
Taking a convex combination of the first bound, with weight
$1-\vartheta$, and the second, with weight $\vartheta$, rather than
counting $S$ twice, gives
\[
\begin{aligned}
S\ge{}&\big((1-\vartheta)a-\vartheta\Gamma_c\big)A
       +\vartheta W_1+\frac{1-\vartheta}{2}W_2\\
 &+\frac{(1-\vartheta)a}{2}\|\eta\|_2^2-CN^3.
\end{aligned}
\]
Choose $0<\vartheta\le a/(2(a+\Gamma_c))$. The coefficient of $A$
is nonnegative and can be discarded. Consequently,
\begin{equation}\label{est:10A}
S\ge c_4\left(\int Q_c|\epsilon|^2+\|\eta\|_2^2\right)-CN^3,
\qquad
c_4=\min\left\{\vartheta,\frac{1-\vartheta}{2},
                      \frac{(1-\vartheta)a}{2}\right\}>0.
\end{equation}
This clarifies the roles of the two complementary controls: the
spectral gap controls the component perpendicular to $e_1$, while $A$
controls the missing coefficient.

Finally, $S\ge\frac12\|\epsilon_x\|_2^2
-\frac12\int Q_c|\epsilon|^2$.
A convex combination of this bound and \eqref{est:10A}, with
sufficiently small weight on the former, proves
\eqref{coer:dark-bound}. All constants are uniform on $K$, because
$\gamma$ is bounded away from zero.
\end{proof}

\subsection{Mass-augmented coercivity}
Set
\[
J=H+2M,\qquad
h(c)=H(U_c,0)=\frac13(2-c^2)^{3/2}.
\]
\begin{proposition}\label{prop:augmented-coercivity}
Fix a compact interval $K\subset(-\sqrt2,\sqrt2)$.
There exist $\rho,\kappa,B>0$ such that, for $c\in K$,
$(U_c+\epsilon,\omega)\in\mathcal X$,
$\|\epsilon\|_{H_c^1}+\|\omega\|_{H^1}<\rho$, and
\eqref{coer:constraints}, one has
\begin{equation}\label{coer:augmented}
\kappa D_c^2
\le J(U_c+\epsilon,\omega)-h(c)\le B D_c^2,
\qquad
D_c^2=\|\epsilon\|_{H_c^1}^2+\|\omega\|_{H^1}^2+\|\eta\|_2^2.
\end{equation}
Constants may depend on $K,\alpha,\beta$.
\end{proposition}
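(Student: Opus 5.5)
The starting point is the exact expansion \eqref{coer:expansion}, to which we add $2M(\omega)=\int|\omega|^2$. On the slice \eqref{coer:constraints} the linear term $c\int R_c'\epsilon_2$ vanishes. This gives
\[
J(U_c+\epsilon,\omega)-h(c)
=\frac12q_c(\epsilon_1)+\frac12q_c(\epsilon_2)+\frac14\int\eta^2
+\frac12\widetilde m_c(\omega)+\frac\alpha2\int\eta\zeta+\frac\beta4\int\zeta^2,
\]
where $\widetilde m_c(\omega)=m_c(\omega)+2\int|\omega|^2=\int|\omega_x|^2+\int(\alpha+1-\alpha Q_c)|\omega|^2$. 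Since $0<Q_c\le1$ and $\alpha\ge0$, the weight satisfies $\alpha+1-\alpha Q_c\ge1$. Hence $\widetilde m_c(\omega)\ge\|\omega\|_{H^1}^2$, uniformly in $c$, and this is exactly what the mass augmentation buys. The proof therefore splits into the dark part, the bright part, and the coupling terms.

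\emph{Lower bound.} For the dark part I apply Lemma~\ref{coer:dark} with $a=1/8$, keeping the remaining $\frac18\int\eta^2$ in reserve. This gives $\frac12q_c(\epsilon_1)+\frac12q_c(\epsilon_2)+\frac14\|\eta\|_2^2\ge b(\|\epsilon\|_{H_c^1}^2+\|\eta\|_2^2)+\frac18\|\eta\|_2^2-C\|\epsilon\|_{H_c^1}^3$. The quartic term $\frac\beta4\int\zeta^2$ is nonnegative and is discarded. The only indefinite term is the coupling $\frac\alpha2\int\eta\zeta$, which I bound by $\frac\alpha2\|\eta\|_2\|\omega\|_4^2\le C\|\eta\|_2\|\omega\|_{H^1}^2$ using the one-dimensional embedding $H^1\subset L^4$. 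Young's inequality then gives $\le \frac18\|\eta\|_2^2+C\|\omega\|_{H^1}^4$ (alternatively $\le C\rho\|\omega\|_{H^1}^2$ once $\|\eta\|_2$ is known to be small). The quartic $\|\omega\|_{H^1}^4\le\rho^2\|\omega\|_{H^1}^2$ is absorbed into $\frac12\|\omega\|_{H^1}^2$ for $\rho$ small, and the cubic $C\|\epsilon\|_{H_c^1}^3\le C\rho\|\epsilon\|_{H_c^1}^2$ is absorbed into $b\|\epsilon\|_{H_c^1}^2$. This yields $\kappa D_c^2$. An alternative to using the reserved $\eta$-term is to note that $\int\eta\zeta\ge-\int|\epsilon_1R_c+\epsilon_2I_c|\,\zeta\cdot2$ and bound it through $\|\epsilon\|_\infty$ locally; the Young argument above is simpler, however, and avoids any need for $\eta$ to be small.

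\emph{Upper bound.} Here I need $|q_c(\epsilon_j)|\le\|\epsilon\|_{H_c^1}^2$, which is immediate from the definition. I also need $\widetilde m_c(\omega)\le(\alpha+1)\|\omega\|_{H^1}^2$, and $\frac14\int\eta^2$ appears directly in $D_c^2$. For the nonlinear terms, $|\int\eta\zeta|\le C\|\eta\|_2\|\omega\|_{H^1}^2\le C\rho D_c^2$, and $\int\zeta^2=\|\omega\|_4^4\le C\rho^2\|\omega\|_{H^1}^2$. Together these give $BD_c^2$ with $B$ depending on $\alpha,\beta$.

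The main obstacle is whether the smallness hypothesis controls $\|\eta\|_2$, since $\eta$ is not small merely from $\|\epsilon\|_{H_c^1}<\rho$ (compare the phase example in the introduction: a slowly varying far-field phase keeps $\eta=0$, but a far-field modulus change can make $\|\eta\|_2$ large). For this reason I will arrange every estimate so that $\|\eta\|_2$ enters only quadratically and linearly against small factors, as in the Young splitting above. This makes the lower bound hold without assuming smallness of $\eta$, and the upper bound uses only $\|\eta\|_2\|\omega\|_{H^1}^2\le\frac12\|\eta\|_2^2+\frac12\|\omega\|_{H^1}^4$. Uniformity in $c\in K$ comes from the uniformity in Lemma~\ref{coer:dark}.
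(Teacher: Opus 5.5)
Your proof is correct, and your upper bound is the same as the paper's: exact expansion, $\|\omega\|_4^2\le C\|\omega\|_{H^1}^2$, and Young's inequality, with no smallness of $\|\eta\|_2$ needed.

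The lower bound is organized differently. You linearize the bright weight to $1+\alpha|U_c|^2$ and treat $\frac\alpha2\int\eta\zeta$ as a perturbation, reserving $\frac18\|\eta\|_2^2$ and applying Lemma~\ref{coer:dark} with $a=1/8$. This is the device the paper uses for the unaugmented energy in Proposition~\ref{coer:threshold}, where the bright weight has no sign. Here the paper instead keeps the whole bright contribution together,
\[
 J(U_c+\epsilon,\omega)-h(c)=H(U_c+\epsilon,0)-h(c)+\tfrac12\|\omega_x\|_2^2
 +\tfrac12\int(1+\alpha|U_c+\epsilon|^2)|\omega|^2+\tfrac\beta4\|\omega\|_4^4,
\]
so for $\alpha,\beta\ge0$ the coupling is absorbed into the pointwise nonnegative density $\alpha|U_c+\epsilon|^2|\omega|^2$. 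The bright part is then at least $\frac12\|\omega\|_{H^1}^2$ with no remainder. Lemma~\ref{coer:dark} is used with the full $a=1/4$, and smallness is needed only to absorb the cubic term in $\epsilon$.

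Your route pays with a quartic remainder and requires smallness of $\|\omega\|_{H^1}$. In exchange, it uses only positivity of the linearized operator $-\partial_x^2+1+\alpha|U_c|^2$, so it would also cover cases where $\alpha|\psi|^2|\phi|^2$ has no sign (e.g.\ $-1<\alpha<0$).

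One caution: the alternative you mention and set aside does not work. That alternative drops $|\epsilon|^2\zeta$ and bounds $\operatorname{Re}(\bar U_c\epsilon)$ pointwise, but $\|\epsilon\|_\infty$ is not controlled by $\|\epsilon\|_{H_c^1}$. In the phase example of the introduction, $\eta=0$ while $|\operatorname{Re}(\bar U_c\epsilon)|=2|U_c|^2$ for $x\ge 2n$. The paper's grouping is exactly what retains that cancellation.
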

\begin{proof}
Keep the full bright contribution together. The exact identity
\[
\begin{aligned}
 J(U_c+\epsilon,\omega)-h(c)
 ={}&H(U_c+\epsilon,0)-h(c)+\frac12\|\omega_x\|_2^2\\
 &+\frac12\int(1+\alpha|U_c+\epsilon|^2)|\omega|^2
       +\frac\beta4\|\omega\|_4^4
\end{aligned}
\]
and $\alpha,\beta\ge0$ imply
\[
 J(U_c+\epsilon,\omega)-h(c)
 \ge H(U_c+\epsilon,0)-h(c)+\frac12\|\omega\|_{H^1}^2.
\]
On the slice \eqref{coer:constraints} the scalar linear term vanishes.
Lemma~\ref{coer:dark}, applied with $a=1/4$, therefore gives
\[
 J(U_c+\epsilon,\omega)-h(c)
 \ge b\bigl(\|\epsilon\|_{H_c^1}^2+\|\eta\|_2^2\bigr)
       +\frac12\|\omega\|_{H^1}^2-C\|\epsilon\|_{H_c^1}^3.
\]
Decrease $\rho$ to absorb the cubic remainder. This proves the lower
bound in \eqref{coer:augmented}, without requiring $\beta>\alpha^2$
or $\alpha>1$, and includes $\alpha=0$ or $\beta=0$.
For the upper bound use the exact expansion,
$|\int\eta|\omega|^2|\le C\|\eta\|_2\|\omega\|_{H^1}^2$, and
$\|\omega\|_4^4\le C\|\omega\|_{H^1}^4$.
Young's inequality bounds the mixed term by
$C\|\eta\|_2^2+C\|\omega\|_{H^1}^4$.
\end{proof}

\begin{lemma}[Modulus and density defects]\label{coer:metric}
Uniformly for $c$ in a compact subsonic interval,
\[
\big\||U_c+\epsilon|-|U_c|\big\|_2^2
\le C\left(\|\epsilon\|_{H_c^1}^2+\|\eta\|_2^2\right).
\]
Conversely, in a sufficiently small neighbourhood for
$\mathfrak d_c$, one has
$\|\eta\|_2\le C\big\||U_c+\epsilon|-|U_c|\big\|_2$.
In particular, $D_c$ and $\mathfrak d_c(U_c+\epsilon,\omega)$
are locally comparable, including at $c=0$.
\end{lemma}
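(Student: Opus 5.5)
Both inequalities rest on the pointwise factorization
\[
 |U_c+\epsilon|^2-|U_c|^2=\eta=\bigl(|U_c+\epsilon|-|U_c|\bigr)\bigl(|U_c+\epsilon|+|U_c|\bigr).
\]
Write $\mu=|U_c+\epsilon|-|U_c|$. The plan is to split $\R$ into a core interval $[-R,R]$ and its complement. On the complement, $|U_c|\ge m>0$; choose $R,m$ uniformly for $c$ in the compact interval. On the core the weight $Q_c$ is bounded below by a constant depending only on $R$ and the interval. For $c\ne0$ one could even take $R=0$, since then $|U_c|\ge|c|/\sqrt2$. Near $c=0$, however, $R_c$ vanishes at the origin, so the core is genuinely needed and the argument must not rely on a lower bound for $|U_c|$ there.

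\emph{First inequality.} On $|x|\ge R$, since $|U_c+\epsilon|+|U_c|\ge m$, we have $|\mu|\le m^{-1}|\eta|$. This gives $\int_{|x|\ge R}\mu^2\le m^{-2}\|\eta\|_2^2$. On $[-R,R]$, the reverse triangle inequality gives $|\mu|\le|\epsilon|$, and $Q_c\ge q_R>0$ there. Hence
\[
 \int_{-R}^R\mu^2\le q_R^{-1}\int Q_c|\epsilon|^2\le q_R^{-1}\|\epsilon\|_{H_c^1}^2 .
\]
Summing the two regions proves the first estimate, with no smallness assumption.

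\emph{Converse.} Here we need $|U_c+\epsilon|+|U_c|$ bounded above. Near the core this uses the pointwise bound $|\epsilon(x)|\le C_K(1+|x|^{1/2})N$ from the proof of Lemma~\ref{coer:dark}, which only gives local smallness. The better route is to use the modulus defect itself: $\bigl||U_c+\epsilon|\bigr|\le|U_c|+|\mu|\le 1+|\mu|$, and therefore $|\eta|\le|\mu|(2+|\mu|)$. The difficulty is then controlling $\|\mu\|_\infty$. For this I would show $\mu\in H^1$ with $\|\mu_x\|_2\le\|\epsilon_x\|_2+\|(|U_c|)_x\|_2$. Since $|U_c+\epsilon|$ is Lipschitz in $\epsilon$ pointwise, differentiating gives $\bigl|(|U_c+\epsilon|)_x\bigr|\le|U_c'+\epsilon_x|$ almost everywhere. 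This yields
\[
 |\mu_x|\le 2|U_c'|+|\epsilon_x|,
\]
which alone is not small. To fix this, bound $\bigl|(|U_c+\epsilon|-|U_c|)_x\bigr|$ differently on the two regions. Where $|U_c|\ge m/2$ and $|\epsilon|\le m/4$, the map $z\mapsto|z|$ is smooth with bounded derivatives, so $|\mu_x|\le C(|\epsilon_x|+|U_c'||\epsilon|)$. In the core, $|\mu|\le|\epsilon|$ is already small in $L^\infty$ because $\|\epsilon\|_{L^\infty(-2R,2R)}\le C_{K,R}N$.

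\emph{Obstacle.} The main obstacle is the far field, where $\epsilon$ need not be small in $L^\infty$: the phase may wander, as in the example in the introduction. That wandering does not affect the modulus, though. So I would argue directly on $\mu$ there: $\mu\in H^1(|x|>R)$, with $\|\mu\|_{L^2}\le\mathfrak d_c$ and $\|\mu_x\|_2\le\|(|\psi|)_x-(|U_c|)_x\|_2$. On the far field, $\bigl|(|\psi|)_x\bigr|\le|\psi_x|$ and $\psi_x=U_c'+\epsilon_x$, so
\[
 \|\mu_x\|_{L^2(|x|>R)}\le\|\epsilon_x\|_2+2\|U_c'\|_{L^2(|x|>R)} .
\]
Choosing $R$ large makes $\|U_c'\|_{L^2(|x|>R)}$ small. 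Gagliardo--Nirenberg then gives $\|\mu\|_{L^\infty(|x|>R)}\le C\|\mu\|_2^{1/2}\|\mu_x\|_2^{1/2}$, which is small. Combined with the core bound, $\|\mu\|_\infty\le1$ in a small $\mathfrak d_c$-neighbourhood, and then $\|\eta\|_2\le3\|\mu\|_2$.

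\emph{Local comparability.} Put together, the two inequalities give $D_c\lesssim\mathfrak d_c$ and $\mathfrak d_c\lesssim D_c$ near the orbit. The $\|\omega\|_{H^1}$ terms are common to both distances and need no further argument. All constants depend only on $\gamma_c$ bounded below, hence are uniform on compact subsonic sets, including $c=0$.
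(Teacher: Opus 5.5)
Your proof is correct. The first inequality is exactly the paper's argument: on a core interval $Q_c$ is bounded below and you use the reverse triangle inequality, while outside it you use $|\mu|\le m^{-1}|\eta|$.

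For the converse you end up with the paper's key tool, the interpolation $\|\mu\|_\infty^2\le 2\|\mu\|_2\|\mu_x\|_2$, but you reach it by a detour. The paper applies it on the whole line together with the crude bound $\|\mu_x\|_2\le\|\epsilon_x\|_2+2\|U_c'\|_2$, which you had already derived. That derivative bound only needs to be finite, not small, because $\|\mu\|_2\le\mathfrak d_c$ is small. Hence $\|\mu\|_\infty^2\le 2\mathfrak d_c(\mathfrak d_c+2\|U_c'\|_2)$ is small, and $|U_c+\epsilon|$ is uniformly bounded at once.

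Your remark that this bound ``alone is not small'' therefore misidentifies what is needed. The core/far-field split, the pointwise bound on $\epsilon$ in the core, and the choice of large $R$ can all be dropped. They are harmless, since half-line interpolation is valid because $\mu\to0$ at infinity, but unnecessary.

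The global version also shows a little more: $\|\eta\|_2\le C\|\mu\|_2$ holds on every $\mathfrak d_c$-bounded set, not only on a small neighbourhood.
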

\begin{proof}
Choose uniform $q,m>0$ with $Q_c\ge q$ on $[-1,1]$ and
$|U_c|\ge m$ on its complement. On the first interval use the
reverse triangle inequality; outside it use
\[
\big||U_c+\epsilon|-|U_c|\big|
=\frac{|\eta|}{|U_c+\epsilon|+|U_c|}
\le m^{-1}|\eta|.
\]
For the converse, put $r=|U_c+\epsilon|$ and $r_c=|U_c|$.
The derivative bound
$\|(r-r_c)'\|_2\le\|\epsilon_x\|_2+2\|U_c'\|_2$
and the assumed smallness of $\|r-r_c\|_2$ give a uniform bound on
$r$ by one-dimensional Sobolev interpolation.
Now $\eta=(r-r_c)(r+r_c)$ gives the claim.
\end{proof}

\subsection{The unaugmented energy threshold}
The restriction in the following proposition concerns $H$ alone,
not the stability theorem.
Define
\[
 f(\gamma)=\frac{2(s+5)}{(s+1)^2},
 \qquad s=\sqrt{9-8\gamma^2},\qquad 0\le\gamma\le1.
\]
This is the continuous extension of
$\frac18[((-\gamma^2+\sqrt{9-8\gamma^2})/(1-\gamma^2))^2-1]$
from $0\le\gamma<1$. In particular, $f(0)=1$, $f(1)=3$, and
\[
 f'(\gamma)=\frac{16\gamma(s+9)}{s(s+1)^3}>0
 \qquad(0<\gamma<1).
\]
\begin{proposition}\label{coer:threshold}
Fix $|c|<\sqrt2$ and coefficients satisfying \eqref{eq:parameters}.
If $\alpha>f(\gamma_c)$, then for all $(U_c+\epsilon,\omega)\in\mathcal X$
with sufficiently small
$\|\epsilon\|_{H_c^1}+\|\omega\|_{H^1}$ satisfying
\eqref{coer:constraints},
\[
H(U_c+\epsilon,\omega)-h(c)\ge C
\left(\|\epsilon\|_{H_c^1}^2+\|\omega\|_{H^1}^2
      +\big\||U_c+\epsilon|-|U_c|\big\|_2^2\right).
\]
The constants depend on the fixed parameters.
If $1<\alpha<f(\gamma_c)$, the same slice contains arbitrarily
small nonzero perturbations along which $H$ decreases, and others
along which it increases. If $\alpha=f(\gamma_c)$, no positive
quadratic coercivity constant of the displayed form exists.
\end{proposition}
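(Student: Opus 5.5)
The plan is to split $H(U_c+\epsilon,\omega)-h(c)$ into a dark part and a bright part using the exact expansion \eqref{coer:expansion}. On the slice \eqref{coer:constraints} the linear term $c\int R_c'\epsilon_2$ vanishes, so
\[
H(U_c+\epsilon,\omega)-h(c)=\Bigl[\tfrac12q_c(\epsilon_1)+\tfrac12q_c(\epsilon_2)+\tfrac14\|\eta\|_2^2\Bigr]
+\tfrac12m_c(\omega)+\tfrac\alpha2\int\eta\zeta+\tfrac\beta4\int\zeta^2 .
\]
The bracket is exactly the quantity $S$ of Lemma~\ref{coer:dark} with $a=1/4$. That lemma bounds it below by $b(\|\epsilon\|_{H_c^1}^2+\|\eta\|_2^2)-C\|\epsilon\|_{H_c^1}^3$. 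The quartic term $\frac\beta4\int\zeta^2$ is nonnegative. The coupling term is cubic:
\[
\Bigl|\int\eta|\omega|^2\Bigr|\le\|\eta\|_2\|\omega\|_4^2\le C\|\eta\|_2\|\omega\|_{H^1}^2 .
\]
Everything therefore reduces to the sign of the quadratic form $m_c$, that is, of the Schr\"odinger operator
\[
L_\alpha=-\partial_x^2+(\alpha-1)-\alpha\gamma^2\sech^2(kx),\qquad k=\gamma/\sqrt2 .
\]

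The key step is the explicit spectrum of $L_\alpha$, a P\"oschl--Teller operator. Write $\alpha\gamma^2=\lambda(\lambda+1)k^2$, i.e.\ $\lambda(\lambda+1)=2\alpha$ with $\lambda>0$. Then the bottom of the spectrum of $-\partial_x^2-\lambda(\lambda+1)k^2\sech^2(kx)$ is the eigenvalue $-\lambda^2k^2$, with eigenfunction $\sech^\lambda(kx)$. Hence $\inf\sigma(L_\alpha)=\min\{\alpha-1,\ \alpha-1-\lambda^2\gamma^2/2\}=\alpha-1-\lambda^2\gamma^2/2$.

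Next I will check that $\alpha-1-\lambda^2\gamma^2/2=0$ exactly when $\alpha=f(\gamma)$. Substituting $\alpha=(\lambda^2+\lambda)/2$ gives $\lambda^2(1-\gamma^2)+\lambda-2=0$, so
\[
\lambda=\frac{-1+s}{2(1-\gamma^2)},\qquad 2\lambda+1=\frac{s-\gamma^2}{1-\gamma^2}.
\]
Then $\alpha=((2\lambda+1)^2-1)/8$, which is the stated formula for $f$. By monotonicity in $\alpha$, $L_\alpha>0$ iff $\alpha>f(\gamma)$; the case $\gamma=1$ is handled by continuity. Since $f\ge1$, this condition also forces $\alpha>1$, so the essential spectrum $[\alpha-1,\infty)$ is positive.

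\emph{Case $\alpha>f(\gamma_c)$.} Let $\mu>0$ be the spectral bottom of $L_\alpha$. Taking a convex combination of $m_c(\omega)\ge\mu\|\omega\|_2^2$ with $m_c(\omega)\ge\|\omega_x\|_2^2-C\|\omega\|_2^2$ gives $m_c(\omega)\ge\mu'\|\omega\|_{H^1}^2$. Altogether
\[
H-h\ge b\bigl(\|\epsilon\|_{H_c^1}^2+\|\eta\|_2^2\bigr)+\tfrac{\mu'}2\|\omega\|_{H^1}^2-C\bigl(\|\epsilon\|_{H_c^1}^3+\|\eta\|_2\|\omega\|_{H^1}^2\bigr).
\]
The last error is not obviously small, because $\|\eta\|_2$ is not a priori small. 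I would use $\|\eta\|_2\le C\|\epsilon\|_{H_c^1}$ type control from smallness plus the argument in Lemma~\ref{coer:metric}, and failing that Young's inequality to get $\le\frac b2\|\eta\|_2^2+C\|\omega\|_{H^1}^4$. Both cubic/quartic errors are then absorbed for small $\rho$. Finally, the first inequality of Lemma~\ref{coer:metric} converts $\|\eta\|_2^2$ into the modulus defect on the right-hand side.

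\emph{Case $1<\alpha<f(\gamma_c)$.} Take $\epsilon=0$, which lies on the slice, and $\omega=t\,\sech^\lambda(kx)\in H^1$. Then
\[
H-h=\tfrac{t^2}2m_c(\sech^\lambda(k\,\cdot))+\tfrac{\beta t^4}4\int\sech^{4\lambda}(kx)\,dx<0
\]
for small $t$. For an increasing direction take $\omega=0$ and $\epsilon$ small on the slice; then Lemma~\ref{coer:dark} gives $H-h>0$.

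\emph{Case $\alpha=f(\gamma_c)$.} Here $L_\alpha\sech^\lambda(kx)=0$ with $\sech^\lambda(kx)\in H^1$. The same test family gives $H-h=O(t^4)$, while the right-hand side is of order $t^2$, so no positive quadratic constant exists.

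The main obstacle I expect is verifying the P\"oschl--Teller ground state carefully, including the identity matching the threshold with $f$. The second point needing care is the handling of the mixed term $\int\eta|\omega|^2$ when only $\|\epsilon\|_{H_c^1}+\|\omega\|_{H^1}$ is assumed small.
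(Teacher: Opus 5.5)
Your proposal is correct and follows the paper's proof essentially step by step. It uses the same P\"oschl--Teller ground state $\sech^{p_\alpha}(\gamma x/\sqrt2)$, whose eigenvalue $\alpha-1-\gamma^2p_\alpha^2/2$ is the bottom of the spectrum. It uses the same bright test family $\omega=t\varphi$ for the indefinite and threshold cases, and Lemmas~\ref{coer:dark} and~\ref{coer:metric} for the coercive case.

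On the mixed term, only your Young's-inequality fallback works. No bound $\|\eta\|_2\le C\|\epsilon\|_{H_c^1}$ is available when only $\|\epsilon\|_{H_c^1}+\|\omega\|_{H^1}$ is small, because Lemma~\ref{coer:metric}'s converse needs the modulus defect small as well. The fallback is equivalent to the paper's pointwise bound $\frac14\eta^2+\frac\alpha2\eta|\omega|^2+\frac\beta4|\omega|^4\ge\frac18\eta^2-\frac{\alpha^2}{2}|\omega|^4$, which the paper combines with Lemma~\ref{coer:dark} at $a=1/8$.
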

\begin{proof}
It suffices to consider $\alpha>1$: all the asserted alternatives
have this property, since $f(\gamma_c)>1$ for $|c|<\sqrt2$.
Put
\[
p_\alpha=\frac{\sqrt{1+8\alpha}-1}{2},\qquad
\varphi(x)=\sech^{p_\alpha}\!\left(\frac{\gamma x}{\sqrt2}\right),\qquad
\lambda_0=\alpha-1-\frac{\gamma^2p_\alpha^2}{2}.
\]
A direct differentiation shows that
$(-\partial_x^2+\alpha-1-\alpha Q_c)\varphi=\lambda_0\varphi$.
The positive ground state gives the factorization
\[
m_c(w)=\int\varphi^2\left|\partial_x(w/\varphi)\right|^2
                  +\lambda_0\|w\|_2^2
\]
initially for compactly supported smooth $w$.
Density gives $m_c(w)\ge\lambda_0\|w\|_2^2$ for every $w\in H^1$.
Since $\varphi\in H^1$, $\lambda_0$ is the bottom of the spectrum.
Solving $\lambda_0>0$ gives exactly $\alpha>f(\gamma)$;
at $\gamma=1$ it reduces to $\alpha>3$.
If $\lambda_0>0$, boundedness of the potential also gives
$m_c(w)\ge b\|w\|_{H^1}^2$ for some $b>0$.
In this case, the mixed term can be estimated by
\[
 \frac14\eta^2+\frac\alpha2\eta|\omega|^2
          +\frac\beta4|\omega|^4
 \ge\frac18\eta^2-\frac{\alpha^2}{2}|\omega|^4,
\]
using $\beta\ge0$. Apply Lemma~\ref{coer:dark} with $a=1/8$ in
\eqref{coer:expansion}, where the linear term vanishes on the slice.
Together with $m_c(\omega)\ge b\|\omega\|_{H^1}^2$, this gives
\[
\begin{aligned}
 H(U_c+\epsilon,\omega)-h(c)
 \ge{}&b_1\bigl(\|\epsilon\|_{H_c^1}^2+\|\eta\|_2^2\bigr)
        +b_2\|\omega\|_{H^1}^2\\
 &-C\|\epsilon\|_{H_c^1}^3-C_\alpha\|\omega\|_{H^1}^4
\end{aligned}
\]
for some $b_1,b_2>0$. Smallness absorbs both remainders, and
Lemma~\ref{coer:metric} yields the asserted modulus-defect bound.

If $\lambda_0<0$, take $\epsilon=0$ and $\omega=t\varphi$.
The exact energy difference is
\[
\frac{\lambda_0t^2}{2}\|\varphi\|_2^2
     +\frac{\beta t^4}{4}\|\varphi\|_4^4<0
\]
for sufficiently small $t>0$. For a positive direction take
$\omega=0$ and $\epsilon=t g$, where $g$ is a nonzero real odd
smooth compactly supported function; Lemma~\ref{coer:dark} applies.
These are directions on the constraint slice, not an instability
criterion. At $\lambda_0=0$, the same bright perturbation gives
\[
 H(U_c,t\varphi)-h(c)=\frac{\beta t^4}{4}\|\varphi\|_4^4.
\]
Dividing by $t^2$ and letting $t\to0$ rules out any positive quadratic
coercivity constant, also when $\beta=0$. The positive coercivity
constant cannot be uniform as $\lambda_0\downarrow0$.
\end{proof}

In terms of the parameter $p_\alpha$ above,
\[
 \lambda_0=\frac{p_\alpha-2}{2}+\frac{p_\alpha^2c^2}{4}.
\]
Consequently, if $\alpha>3$, unaugmented coercivity holds at every
subsonic speed. If $\alpha=3$, it holds precisely for
$0<|c|<\sqrt2$; the black soliton lies at the threshold.
For $1<\alpha<3$, define
\[
 c_*(\alpha)=\frac{\sqrt{2(2-p_\alpha)}}{p_\alpha}.
\]
Coercivity holds for $c_*(\alpha)<|c|<\sqrt2$, the lowest bright
eigenvalue is zero for $|c|=c_*(\alpha)$, and it is negative for
$|c|<c_*(\alpha)$. These alternatives concern the unaugmented energy,
not the orbital stability proved in Theorem~\ref{thm:stability}.

\section{The total untwisted momentum}\label{sec:momentum}

The dark component need not have a global phase, or phase limits at
either end of the line. We therefore use a momentum with values in
$\mathbb R/\pi\mathbb Z$. The scalar construction and its affine
variation formula are due to B\'ethuel--Gravejat--Saut--Smets
\cite[Lemma~1.9 and Proposition~1.16]{BGSS2008}. We give the details
needed for the coupled flow, including the topology used in passing
to nonsmooth solutions.

For $u\in\mathcal E$, the function $|u|-1$ belongs to $H^1(\mathbb R)$:
indeed $||u|-1|\le |1-|u|^2|$ and
$|\partial_x|u||\le |u_x|$ almost everywhere. Consequently
$|u(x)|\to1$ as $|x|\to\infty$. Choose $R>0$ such that $|u|\ge1/2$
on $\{|x|\ge R\}$, and choose continuous phases $\vartheta_+$ and
$\vartheta_-$ on the two exterior half-lines. The common notation for their derivatives is
\[
 v_u=\frac{\operatorname{Im}(\bar u u_x)}{|u|^2}
 \quad (|x|\ge R).
\]
In particular $v_u\in L^2(\{|x|\ge R\})$. Define
\begin{equation}\label{mom:finite-core}
\begin{split}
 \mathcal P_R(u)={}&\frac12\int_{-R}^{R}
                       \operatorname{Im}(\bar u u_x)
 -\frac12\bigl(\vartheta_+(R)-\vartheta_-(-R)\bigr)\\
 &+\frac12\int_{|x|>R}(|u|^2-1)v_u,
 \qquad [P_{\rm d}](u)=\mathcal P_R(u)\pmod\pi.
\end{split}
\end{equation}
Every integral is finite, the exterior one by Cauchy--Schwarz.
Increasing $R$ does not change $\mathcal P_R$ if the same phase
branches are used: the change in the interior integral cancels the
change in the phase increment and exterior integral. Changing either
phase branch changes $\mathcal P_R$ by an integer multiple of $\pi$.
Thus $[P_{\rm d}]$ is well defined. Equivalently, it is the limit
modulo $\pi$ of
\[
 \frac12\int_{-S}^{S}\operatorname{Im}(\bar u u_x)
 -\frac12\bigl(\vartheta_+(S)-\vartheta_-(-S)\bigr),
 \qquad S\longrightarrow\infty.
\]
This expression also proves invariance under translations and
multiplication by a constant complex number of modulus one.
For $(u,w)\in\mathcal X$ set
\begin{equation*}
 [P](u,w)=[P_{\rm d}](u)
       +\frac12\int\operatorname{Im}(\bar w w_x)
       \quad\hbox{in }\mathbb R/\pi\mathbb Z.
\end{equation*}

\begin{proposition}[Conservation of total momentum]\label{prop:momentum}
The map $[P]:\mathcal X\to\mathbb R/\pi\mathbb Z$ is continuous for
$d_{\mathcal X}$. Every solution furnished by Theorem~\ref{thm:gwp}
satisfies
\[
 [P](\psi(t),\phi(t))=[P](\psi_0,\phi_0),\qquad t\in\mathbb R.
\]
Moreover, for $u\in\mathcal E$ and $h\in H^1(\mathbb R;\mathbb C)$,
\begin{equation}\label{mom:affine}
 [P_{\rm d}](u+h)-[P_{\rm d}](u)
  =\frac12\int\operatorname{Im}(\bar h h_x)
     +\int\operatorname{Im}(\bar h u_x)\pmod\pi.
\end{equation}
On the open subset where $\inf_{\mathbb R}|u|>0$, the real quantity
\begin{equation}\label{mom:nonvanishing}
 P(u,w)=\frac12\int (|u|^2-1)
                \frac{\operatorname{Im}(\bar u u_x)}{|u|^2}
          +\frac12\int\operatorname{Im}(\bar w w_x)
\end{equation}
is a continuous lift of $[P]$. It is conserved on every connected
time interval on which the solution has a nonvanishing dark component.
\end{proposition}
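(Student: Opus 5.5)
The proof has four steps, taken in the following order: the affine formula \eqref{mom:affine}; continuity of $[P]$; conservation for regular solutions, transferred to rough ones by density; and finally the nonvanishing lift.

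\emph{Step 1: the affine formula.} Fix $u\in\mathcal E$ and $h\in H^1$. Choose $R$ large, so that $|u|\ge 1/2$ and $|u+h|\ge1/2$ on $\{|x|\ge R\}$; this is possible because $h(x)\to0$. On $[-S,S]$ expand
\[
\operatorname{Im}(\overline{(u+h)}(u+h)_x)=\operatorname{Im}(\bar uu_x)+\operatorname{Im}(\bar h h_x)+\operatorname{Im}(\bar h u_x)+\operatorname{Im}(\bar u h_x).
\]
Integrate the last term by parts: $\operatorname{Im}(\bar u h_x)=\partial_x\operatorname{Im}(\bar u h)+\operatorname{Im}(\bar h u_x)$. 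This produces the factor $\int\operatorname{Im}(\bar h u_x)$ together with a boundary term $\tfrac12[\operatorname{Im}(\bar u h)]_{-S}^{S}$. For the phases, write $u+h=u(1+h/u)$ on the exterior half-lines. The phase of $u+h$ is then the phase of $u$ plus $\arg(1+h/u)$, with branches chosen continuously up to integer multiples of $2\pi$; this is what produces the ambiguity modulo $\pi$ after the factor $\tfrac12$. Since $h(\pm S)\to0$ and $u$ is bounded, both the boundary term $\operatorname{Im}(\bar u h)(\pm S)$ and the phase correction $\arg(1+h/u)(\pm S)$ tend to $0$. Passing to the limit $S\to\infty$ in the limit characterisation of $[P_{\rm d}]$ then gives \eqref{mom:affine}. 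Integrability of $\operatorname{Im}(\bar h u_x)$ follows from $h,u_x\in L^2$.

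\emph{Step 2: continuity of $[P]$.} Let $\tilde u$ be $d_E$-close to $u$. I cannot write $\tilde u-u\in H^1$, since $d_E$ only gives closeness in $L^\infty$ together with the derivative and density controls. Instead I work directly with \eqref{mom:finite-core} at a fixed $R$ chosen for $u$. Uniform closeness gives $|\tilde u|\ge1/4$ on $\{|x|\ge R\}$, so the phases of $\tilde u$ can be chosen to differ from those of $u$ by a small continuous function there. Then:
\begin{itemize}
\item the interior integral is continuous in $\|\cdot\|_\infty$ plus $\|\partial_x\cdot\|_2$;
\item the phase values at $\pm R$ are continuous;
\item in the exterior term, $|\tilde u|^2-1\to|u|^2-1$ in $L^2$ and $v_{\tilde u}\to v_u$ in $L^2(\{|x|\ge R\})$, because the denominators are bounded below.
\end{itemize}
The bright term is continuous in $H^1$.

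\emph{Step 3: conservation.} For regular data ($\psi_{0x}\in H^2$, $\phi_0\in H^3$) the solution lies in affine $H^3$ charts. Hence $\psi(t)=\psi(s)+h$ with $h\in H^1$ and $h\to0$ in $H^1$ as $t\to s$, where $h=\psi(t)-\psi(s)$ lies in $H^1$ by \eqref{gwp:solution-class}. By \eqref{mom:affine}, $[P_{\rm d}](\psi(t))$ differs from $[P_{\rm d}](\psi(s))$ by real quantities that are small for small $|t-s|$. This yields a real-valued local lift that is differentiable, with derivative
\[
\int\operatorname{Im}(\bar\psi_t\psi_x)\quad\text{(dark part)},\qquad\text{plus the analogous bright derivative.}
\]
Substitute $\psi_t=iF_\psi$ and $\phi_t=iF_\phi$ from \eqref{eq:system}. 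The total derivative becomes $\operatorname{Re}\int(F_\psi\bar\psi_x+F_\phi\bar\phi_x)$, which is the derivative of the energy density under translation and therefore vanishes. The potential terms combine into the exact derivative
\[
\partial_x\Bigl[\tfrac14(1-|\psi|^2)^2+\tfrac12(\alpha|\psi|^2-1)|\phi|^2+\tfrac\beta4|\phi|^4\Bigr],
\]
and the kinetic term is $\tfrac12\partial_x|\psi_x|^2$. Both integrate to zero: the potential bracket lies in $W^{1,1}$, and $\psi_{xx}\in H^1$ controls the kinetic term. I expect this to be the main obstacle, because the terms must be grouped so that each piece is integrable even though $(1-|\psi|^2)\psi$ cannot be expanded; I would insert the cutoffs of Theorem~\ref{thm:gwp} to be safe. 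Conservation for rough data then follows from mollified approximation, continuity of the flow in $d_{\mathcal X}$, and Step 2.

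\emph{Step 4: the nonvanishing lift.} If $\inf|u|>0$, a global phase exists. Since $(|u|^2-1)v_u=\operatorname{Im}(\bar uu_x)-v_u|u|^2/|u|^2$, formula \eqref{mom:nonvanishing} agrees with $\mathcal P_R$, so it lifts $[P]$. Continuity of this lift follows as in Step 2, with denominators bounded below. A continuous real lift of a conserved $\mathbb R/\pi\mathbb Z$-valued quantity along a connected time interval takes values in a discrete set, and is therefore constant.
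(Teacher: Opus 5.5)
Your proposal is correct and follows essentially the same route as the paper: the affine formula via integration by parts on $[-S,S]$ with the argument of $(u+h)/u$ tending to zero, continuity of $\mathcal P_R$ at a fixed exterior radius, a differentiable real lift for regular data whose derivative is the integral of the $x$-derivative of the $W^{1,1}$ quantity $\frac12(|\psi_x|^2+|\phi_x|^2)-V$, transfer by mollification, and the whole-line phase in the nonvanishing chart. Three harmless slips: $\operatorname{Im}(\bar\psi_t\psi_x)=-\operatorname{Re}(F_\psi\bar\psi_x)$ (the sign is irrelevant since the integral vanishes), the kinetic term also contains $\frac12\partial_x|\phi_x|^2$, and taking the fixed base point $h=\psi(t)-\psi_0$, as the paper does, gives a globally defined differentiable real lift directly, without patching local lifts.
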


\begin{proof}
\emph{Continuity.} The continuity of the scalar untwisted momentum $[P_{\rm d}]$ is classical; see \cite[Lemma~2.7]{BGSS2008}. We recall the argument in our stronger energy-space topology for completeness.
Let $u_n\to u$ in the dark-component part of $d_{\mathcal X}$.
Uniform convergence provides one common exterior region on which
$|u|,|u_n|\ge1/3$. On this region,
\[
 \|v_{u_n}-v_u\|_2
 \le C\bigl(\|u_{nx}-u_x\|_2
          +\|u_n-u\|_\infty\|u_x\|_2\bigr)\longrightarrow0.
\]
The density defects converge in $L^2$, so the exterior integrals in
\eqref{mom:finite-core} converge. The interior integrals converge by
uniform convergence of the functions and $L^2$ convergence of their
derivatives. At the two endpoints choose phase representatives
converging to those of $u$. This proves continuity of $[P_{\rm d}]$;
the bright term is continuous in $H^1$.

\emph{Affine variation.}
The function $u+h$ belongs to $\mathcal E$, and $h(x)\to0$ at both
ends. On a sufficiently remote exterior region the quotient
$(u+h)/u$ is close to one. Its argument can therefore be chosen
continuously and tends to zero at both ends. Subtract the truncated
momentum expressions above for $u+h$ and $u$. Integration by parts
on $[-S,S]$ gives
\[
\begin{split}
 &\frac12\int_{-S}^{S}
  \operatorname{Im}\bigl(\bar u h_x+\bar h u_x+\bar h h_x\bigr)\\
 &\qquad=\int_{-S}^{S}\operatorname{Im}(\bar h u_x)
       +\frac12\int_{-S}^{S}\operatorname{Im}(\bar h h_x)
       +\frac12[\operatorname{Im}(\bar u h)]_{-S}^{S}.
\end{split}
\]
The boundary term and the difference of exterior phase increments
tend to zero. The two remaining integrands are in $L^1$, proving
\eqref{mom:affine}.

\emph{Regular solutions.}
First consider initial data with $\psi_{0x}\in H^2$ and $\phi_0\in H^3$,
as in the regularization argument of Theorem~\ref{thm:gwp}.
Writing $h(t)=\psi(t)-\psi_0$, that theorem and the extended linear
group give
\[
 h\in C(\mathbb R;H^1),\qquad
 h_t=\psi_t\in C(\mathbb R;H^1).
\]
For example, the linear increment belongs to $H^1$ since
$(e^{it\xi^2}-1)/\xi$ is bounded, and the nonlinear Duhamel term
belongs to $H^1$. The higher regularity follows from the persistence
estimates already proved there. Formula~\eqref{mom:affine}, together
with the bright contribution, defines a differentiable real lift of
the momentum difference from time zero. Its derivative is
\begin{equation}\label{mom:derivative}
 \frac{dP}{dt}
 =\int\operatorname{Im}(\bar\psi_t\psi_x)
      +\int\operatorname{Im}(\bar\phi_t\phi_x).
\end{equation}
The integration by parts in differentiating
$\frac12\int\operatorname{Im}(\bar h h_x)$ is legitimate in $H^1$.

Put $r=|\psi|^2$, $s=|\phi|^2$, and
\[
 V(r,s)=\frac14(1-r)^2+\frac12(\alpha r-1)s+\frac\beta4s^2.
\]
Using the signs in \eqref{eq:system}, the integrand on the right of
\eqref{mom:derivative} is
\[
 \partial_x\left\{
  \frac12(|\psi_x|^2+|\phi_x|^2)-V(r,s)\right\}.
\]
The expression in braces is in $L^1$. Its derivative is integrable
for the present regularity; alternatively, compact cutoffs give an
error bounded by $C S^{-1}$ times its $L^1$ norm. Its integral is
therefore zero, and the total momentum is conserved for regular data.
Notice that the nonlinear coupling contributions cancel only after
the two components have been added.

\emph{General solutions.}
Mollification and the continuous-dependence assertion of
Theorem~\ref{thm:gwp} give regular solutions converging to any
energy-space solution in $C([-T,T];\mathcal X)$ for every $T<\infty$.
The continuity proved above transfers their momentum conservation to
the limit. Since $T$ is arbitrary, the conclusion holds for all time.

\emph{The nonvanishing chart.}
When $\inf|u|>0$, choose a continuous phase on the whole line. Its
derivative is $v_u\in L^2$. Substituting
$\vartheta(R)-\vartheta(-R)=\int_{-R}^R v_u$ into
\eqref{mom:finite-core} proves that \eqref{mom:nonvanishing}
is a real representative of $[P]$. The same $L^2$ product estimates
as in the continuity argument above apply on the whole line, so this
representative is continuous in the nonvanishing chart. Along a
connected interval it is a continuous real function with constant
class modulo $\pi$, and is therefore constant.
\end{proof}

\section{Orbital stability at nonzero speed}\label{sec:nonzero}

Fix $0<|c|<\sqrt2$ throughout this section. The reference speed will
not be modulated. The scalar argument is the energy--momentum
coercivity mechanism of \cite[Proposition~4 and Section~2.1]{BGS2014};
compare also the orbital-stability statement recalled in
\cite[Theorem~1]{BGS2015}. We give the required spectral argument and
coordinate estimates below. In the coupled problem, adding a positive
multiple of the separately conserved bright mass makes the transverse
quadratic form positive. Thus positivity of the unaugmented bright
energy is not required for orbital stability.

\subsection{Coordinates and the distance to the orbit}

For a nonvanishing dark component write
\[
 \psi=re^{i\vartheta},\qquad r=|\psi|,\qquad
 v=\vartheta_x=\operatorname{Im}\frac{\psi_x}{\psi}.
\]
A continuous phase lift exists on the line; no phase limit at infinity
is imposed. Set
\[
 Y=H^1(\mathbb R;\mathbb R)\times L^2(\mathbb R;\mathbb R)
       \times H^1(\mathbb R;\mathbb C),\qquad
 \|(h,k,z)\|_Y^2=\|h\|_{H^1}^2+\|k\|_2^2+\|z\|_{H^1}^2.
\]
The coordinate domain consists of $(r,v,z)\in(1,0,0)+Y$ with
$\inf r>0$. Define
\begin{equation*}
\begin{aligned}
 r_c&=|U_c|=\sqrt{1-\gamma^2\sech^2(\gamma x/\sqrt2)},
 &v_c&=\frac{c(1-r_c^2)}{2r_c^2},\\
 \gamma&=\sqrt{1-c^2/2},
 &\underline r_c&=\inf r_c=\frac{|c|}{\sqrt2}.
\end{aligned}
\end{equation*}
Write $\mathbf q_c=(r_c,v_c,0)$ and
\[
 \mathcal D_c(r,v,z)=\inf_{a\in\mathbb R}
       \|(r(\cdot+a)-r_c,v(\cdot+a)-v_c,z(\cdot+a))\|_Y.
\]

\begin{lemma}[Comparison with the centered complex distance]
\label{nz:bridge}
There exist $\varepsilon_c,C_c>0$ with the following properties.
If $\mathfrak d_c(\psi,z)<\varepsilon_c$, then
\[
 \inf|\psi|\ge \underline r_c/2,\qquad
 \|(|\psi|-r_c,v-v_c,z)\|_Y
       \le C_c\mathfrak d_c(\psi,z).
\]
Conversely, if $\|(r-r_c,v-v_c,z)\|_Y<\varepsilon_c$, then every
reconstruction $\psi=re^{i\vartheta}$ with $\vartheta_x=v$ satisfies
\[
 \inf_{\theta\in\mathbb R}\mathfrak d_c(e^{-i\theta}\psi,z)
       \le C_c\|(r-r_c,v-v_c,z)\|_Y.
\]
Consequently the two orbit distances are locally comparable near the
fixed profile, with translation applied to the solution before
evaluating $\mathfrak d_c$.
\end{lemma}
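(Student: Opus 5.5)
The plan is to prove the two directions separately, both by elementary pointwise algebra on $\psi=re^{i\vartheta}$ and $U_c=r_ce^{i\vartheta_c}$. Here $\vartheta_c$ is a smooth phase of $U_c$; it exists because $\inf r_c=\underline r_c>0$ when $c\ne0$, and its derivative is $v_c$. Throughout, write $\epsilon=\psi-U_c$.

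\emph{Forward direction.} First we obtain the lower bound on $|\psi|$. Write $w=|\psi|-r_c$. Then $\|w\|_2\le\mathfrak d_c$ and $\|w_x\|_2\le\|\psi_x-U_c'\|_2+2\|U_c'\|_2$, because $\bigl|\partial_x|\psi|-\partial_x|U_c|\bigr|$ is controlled pointwise by $|\psi_x|+|U_c'|$. This only gives a bounded $H^1$ norm, so it is better to use a quantitative bound. Note that
\[
|w_x|\le|\epsilon_x|+\Bigl|\frac{\psi}{|\psi|}-\frac{U_c}{|U_c|}\Bigr||U_c'|.
\]
Since $|U_c'|\le C\sqrt{Q_c}$ (both decay like $\sech^2$; indeed $U_c'=R_c'=Q_c/\sqrt2$), the term $\int|U_c'|^2|\epsilon|^2/r_c^2$ is dominated by $C\int Q_c|\epsilon|^2$, at least where $|\psi|$ is not small. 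To avoid circularity, I argue pointwise instead. On any interval of length one we have $\|\epsilon\|_{L^\infty(I)}\le C\|\epsilon\|_{H^1(I)}$, and $\|\epsilon\|_{L^2(I)}$ is controlled by $\sqrt{\int Q_c|\epsilon|^2}$ divided by $\inf_I\sqrt{Q_c}$. On a fixed compact core $[-L,L]$ this gives $\|\epsilon\|_{L^\infty([-L,L])}\le C_L\mathfrak d_c$, hence $|\psi|\ge r_c-C_L\mathfrak d_c$ there. Outside the core, $r_c\ge1-\gamma^2\sech^2(\gamma L/\sqrt2)$ is close to $1$. There the $H^1$ bound on $|\psi|-r_c$ restricted to $|x|\ge L$ gives $\||\psi|-r_c\|_\infty\le C\mathfrak d_c^{1/2}(1+\|\epsilon_x\|_2)^{1/2}$, which is small. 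Together these give $\inf|\psi|\ge\underline r_c/2$. With this lower bound in hand, the estimate on $\|w\|_{H^1}$ follows from the displayed derivative inequality: the unit-vector difference is at most $C|\epsilon|/\underline r_c$, and $|U_c'|^2\le CQ_c$.

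For $v-v_c$, write
\[
v-v_c=\operatorname{Im}\Bigl(\frac{\psi_x}{\psi}-\frac{U_c'}{U_c}\Bigr)
=\operatorname{Im}\frac{\epsilon_x}{\psi}-\operatorname{Im}\frac{U_c'\,\epsilon}{\psi U_c}.
\]
The first term is bounded in $L^2$ by $2\|\epsilon_x\|_2/\underline r_c$. The second is bounded by $C\sqrt{Q_c}|\epsilon|$, so its $L^2$ norm is at most $C\|\epsilon\|_{H^1_c}$. The $z$-part is immediate. This completes the forward bound.

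\emph{Converse direction.} Given $(r,v)$ close to $(r_c,v_c)$ in $H^1\times L^2$, set $\vartheta=\vartheta_c+\theta+\int_0^x(v-v_c)$; the constant $\theta$ absorbs the freedom in reconstruction. Put $\chi(x)=\int_0^x(v-v_c)$, so $|\chi(x)|\le|x|^{1/2}\|v-v_c\|_2$. Then
\[
e^{-i\theta}\psi-U_c=(r-r_c)e^{i(\vartheta_c+\chi)}+r_ce^{i\vartheta_c}(e^{i\chi}-1).
\]
The first term contributes at most $C\|r-r_c\|_{H^1}$ to both the derivative part and the weighted part of the $H^1_c$ norm (its derivative involves $v$, which is bounded in $L^2$, multiplied by $r-r_c\in L^\infty$). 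For the second term, the weighted $L^2$ part is bounded using $|e^{i\chi}-1|\le|\chi|\le|x|^{1/2}\|v-v_c\|_2$ and $\int Q_c|x|<\infty$. Its derivative is
\[
(r_ce^{i\vartheta_c})'(e^{i\chi}-1)+ir_ce^{i\vartheta_c}e^{i\chi}(v-v_c),
\]
which is bounded in $L^2$ by $\|U_c'|x|^{1/2}\|_2\|v-v_c\|_2+\|v-v_c\|_2$. Finally $\||\psi|-|U_c|\|_2=\|r-r_c\|_2$. This proves the converse with the constant phase $\theta$ chosen as above.

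\emph{Main obstacle.} The delicate step is the uniform lower bound $\inf|\psi|\ge\underline r_c/2$ from weighted smallness alone. Far from the core the weight $Q_c$ gives no $L^\infty$ control of $\epsilon$, since the phase may drift there. Control must instead come from the modulus defect $\||\psi|-r_c\|_2$ combined with a bounded derivative, and we use the fact that $r_c$ is near $1$ away from the core. The final sentence of the lemma then follows by applying both estimates to translates of the solution and taking the infimum over translations and phases.
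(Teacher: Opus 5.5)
Your proposal is correct and follows the paper's proof essentially step for step. The paper uses the same interpolation bound $\|w\|_\infty^2\le 2\|w\|_2\|w_x\|_2$ for the lower bound on $|\psi|$; it holds on all of $\mathbb R$, so your separate core estimate is unnecessary. The unit-vector and reciprocal-difference identities for $w_x$ and $v-v_c$ are the same, and so is the phase-normalized decomposition with $|\chi(x)|\le|x|^{1/2}\|v-v_c\|_2$ in the converse. The only variation is that you bound $\int|U_c'|^2|\epsilon|^2$ directly by the weighted norm, using $|U_c'|=Q_c/\sqrt2$ and hence $|U_c'|^2\le Q_c/2$. The paper instead uses the pointwise growth bound $|\epsilon(x)|\le C_c(1+\sqrt{|x|})\,\mathfrak d_c$ together with the exponential decay of $U_c'$; your route is a slightly cleaner way to reach the same estimate.
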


\begin{proof}
Put $d=\mathfrak d_c(\psi,z)$, $e=\psi-U_c$, and $h=|\psi|-r_c$.
Since $Q_c$ has a positive lower bound on $[-1,1]$, the local Sobolev
inequality and the fundamental theorem of calculus give
\begin{equation}\label{nz:pointwise-error}
 |e(x)|\le C_c(1+\sqrt{|x|})\|e\|_{H_c^1}
          \le C_c(1+\sqrt{|x|})d.
\end{equation}
Initially we need only the rough estimate
\[
 \|h\|_2\le d,\qquad
 \|h_x\|_2\le\|\psi_x\|_2+\|U_c'\|_2
                   \le d+2\|U_c'\|_2.
\]
Thus $\|h\|_\infty^2\le2d(d+2\|U_c'\|_2)$. For small $d$ this
implies $r=|\psi|\ge \underline r_c/2$, as well as a uniform upper bound for $r$.
This preliminary square-root estimate is used only to establish
nonvanishing; the coordinate estimate itself is linear in $d$.

Let $A=\bar\psi/r$ and $A_c=\bar U_c/r_c$. In this amplitude range,
$|A-A_c|\le C_c|e|$, and therefore
\[
 h_x=\operatorname{Re}\big((\psi_x-U_c')A+U_c'(A-A_c)\big).
\]
By \eqref{nz:pointwise-error} and exponential decay of $U_c'$,
$\|h_x\|_2\le C_cd$. Similarly,
\[
 v-v_c=\operatorname{Im}\left(
       \frac{\psi_x-U_c'}{\psi}
       +U_c'\left(\frac1\psi-\frac1{U_c}\right)\right),
 \qquad
 \left|\frac1\psi-\frac1{U_c}\right|\le C_c|e|.
\]
Hence $\|v-v_c\|_2\le C_cd$, which proves the first assertion.

Conversely, set $h=r-r_c$, $k=v-v_c$, and choose the constant phase so
that $\vartheta(0)=\vartheta_c(0)$, where $U_c=r_ce^{i\vartheta_c}$.
For $\Delta\vartheta=\vartheta-\vartheta_c$ we have
\[
 |\Delta\vartheta(x)|\le\sqrt{|x|}\|k\|_2.
\]
The identities
\begin{align*}
 \psi-U_c&=e^{i\vartheta}h+
               (e^{i\vartheta}-e^{i\vartheta_c})r_c,\\
 \psi_x-U_c'&=e^{i\vartheta}
                  \big(h_x+i(rk+hv_c)\big)
       +(e^{i\vartheta}-e^{i\vartheta_c})(r_c'+ir_cv_c)
\end{align*}
give
\[
 \|\psi-U_c\|_{H_c^1}
       \le C_c(\|h\|_{H^1}+\|k\|_2),
\]
using $H^1\hookrightarrow L^\infty$ and
$\int |x|(Q_c+|U_c'|^2)<\infty$. The modulus difference is exactly
$h$, and the bright norm is unchanged. Taking approximate minimizing
translations and phases proves the orbit-distance comparison. No
translation invariance of the weighted norm is asserted or needed.
\end{proof}

\subsection{The scalar constrained Hessian}

In these coordinates the conserved functionals are
\begin{align}
 H(r,v,z)&=\frac12\int(r_x^2+r^2v^2+|z_x|^2)
       +\frac14\int(1-r^2)^2
       +\frac12\int(\alpha r^2-1)|z|^2
       +\frac\beta4\int|z|^4,\label{nz:energy}\\
 P(r,v,z)&=\frac12\int(r^2-1)v
       +\frac12\int\operatorname{Im}(\bar z z_x),\qquad
 M(z)=\frac12\int|z|^2.\label{nz:momentum}
\end{align}
Here $P$ is the real momentum in the nonvanishing chart, conserved by
Proposition~\ref{prop:momentum}. All integrals are absolutely convergent,
and these are smooth real functionals on the coordinate domain.
Set $E(r,v)=H(r,v,0)$ and $P_{\rm d}(r,v)=P(r,v,0)$.
The stationary scalar action, with the convention of
\eqref{eq:system}, is $S_c=E+cP_{\rm d}$. Completing the square gives
\begin{align}
 S_c(r,v)&=\frac12\int r_x^2+\int W_c(r)
        +\frac12\int r^2(v-f_c(r))^2,\nonumber\\
 f_c(r)&=\frac c2(r^{-2}-1),\qquad
 W_c(r)=\frac14(r^2-1)^2
             -\frac{c^2}{8}(r^2-2+r^{-2}).\nonumber
\end{align}
The profile satisfies $v_c=f_c(r_c)$ and
$-r_c''+W_c'(r_c)=0$, so $DS_c(r_c,v_c)=0$.

For $w=(h,k)\in Y_{\rm d}=H^1(\mathbb R;\mathbb R)
\times L^2(\mathbb R;\mathbb R)$ the second variation is
\begin{align}
 q_{{\rm d},c}(w)&=\int(h_x^2+V_ch^2)
           +\int r_c^2(k+cr_c^{-3}h)^2,\label{nz:hessian}\\
 V_c&=3r_c^2-1-\frac{c^2}{4}-\frac{3c^2}{4r_c^4}.\nonumber
\end{align}
Let $B_c$ be the associated bilinear form and define
\[
 \ell_c(h,k)=DP_{\rm d}(r_c,v_c)[h,k]
       =\int r_cv_ch+\frac12\int(r_c^2-1)k,
 \qquad F_c(h,k)=\int h r_c'.
\]

\begin{lemma}[Constrained coercivity]\label{nz:scalar-coercivity}
There exists $\kappa_c>0$ such that
\[
 q_{{\rm d},c}(w)\ge\kappa_c\|w\|_{Y_{\rm d}}^2
      \quad\text{if }\ell_c(w)=F_c(w)=0.
\]
Moreover, if $K_c>1/(2\sqrt{2-c^2})$, then for some
$\widetilde\kappa_c>0$,
\[
 q_{{\rm d},c}(w)+2K_c\ell_c(w)^2
      \ge\widetilde\kappa_c\|w\|_{Y_{\rm d}}^2
      \quad\text{if }F_c(w)=0.
\]
\end{lemma}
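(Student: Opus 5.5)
The plan is to treat this as a Grillakis--Shatah--Strauss type statement for the quadratic form $q_{{\rm d},c}$ in \eqref{nz:hessian}: one negative direction, a one-dimensional kernel from translations, and a positive gap on the rest. The momentum constraint $\ell_c=0$ will remove the negative direction and $F_c=0$ the kernel. The sign condition that makes this work is the slope of the momentum along the branch, and the same slope gives the threshold for $K_c$.

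\emph{Step 1 (block diagonalization).} Substitute $\tilde k=k+cr_c^{-3}h$. This is a bounded invertible change of variables on $Y_{\rm d}$, because $\inf r_c=|c|/\sqrt2>0$ when $c\ne0$. In the new variables
\[
q_{{\rm d},c}(h,k)=\langle L_ch,h\rangle+\int r_c^2\tilde k^2,\qquad L_c=-\partial_x^2+V_c .
\]
The $\tilde k$-block is uniformly positive on $L^2$. For $L_c$, note that $V_c\to 2-c^2>0$ at infinity, so the essential spectrum of $L_c$ is $[2-c^2,\infty)$. Differentiating $-r_c''+W_c'(r_c)=0$ in $x$ gives $L_cr_c'=0$; I expect $V_c=W_c''(r_c)$, which I will check by direct computation. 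Since $r_c$ is even with a unique minimum at $0$, $r_c'$ has exactly one zero. Sturm--Liouville theory then shows that $L_c$ has exactly one negative eigenvalue and kernel spanned by $r_c'$. Consequently $q_{{\rm d},c}$ has Morse index one on $Y_{\rm d}$. Its kernel is spanned by $\mathbf t=(r_c',v_c')$, since $v_c'=-cr_c'/r_c^3$ means $\tilde k=0$ there. It is bounded below by a positive multiple of $\|w\|_{Y_{\rm d}}^2$ on a closed complement of the negative and null directions; the $H^1$ control of $h$ comes from the $h_x^2$ term together with the gap.

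\emph{Step 2 (the momentum slope).} Differentiating $DS_c(r_c,v_c)=0$ in $c$ gives $B_c(\partial_c\mathbf q_c,w)=-\ell_c(w)$ for all $w$. Here $\partial_c\mathbf q_c=(\partial_cr_c,\partial_cv_c)\in Y_{\rm d}$ by exponential decay. Translation invariance gives $\ell_c(\mathbf t)=0$, so $\ell_c$ is $B_c$-orthogonal to the kernel and $L^{-1}\ell_c=-\partial_c\mathbf q_c$ modulo $\mathbf t$. Then
\[
\langle L^{-1}\ell_c,\ell_c\rangle=-\ell_c(\partial_c\mathbf q_c)=-p'(c),\qquad p(c)=P_{\rm d}(r_c,v_c).
\]
Stationarity of $S_c$ along the branch gives $h'(c)=-cp'(c)$. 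With $h(c)=\tfrac13(2-c^2)^{3/2}$ this yields $p'(c)=\sqrt{2-c^2}>0$, using $c\ne0$ to divide. If needed I would confirm this by integrating the explicit profile directly.

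\emph{Step 3 (standard GSS argument).} Let $\chi$ be the negative eigenvector. Because $\langle L^{-1}\ell_c,\ell_c\rangle<0$, the usual min--max argument shows $q_{{\rm d},c}>0$ on $\{\ell_c=0\}\cap\mathbf t^{\perp_B}$. The gap from Step 1 upgrades this to $q_{{\rm d},c}\ge\kappa\|w\|^2$ on that space. The constraint $F_c(w)=\int hr_c'=0$ is transverse to the kernel, since $F_c(\mathbf t)=\int r_c'^2>0$. So any $w$ with $F_c=\ell_c=0$ decomposes as $w=a\mathbf t+w_0$ with $|a|\le C\|w_0\|$, which gives the first inequality.

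For the second inequality, the form $q+2K\ell^2$ is a rank-one positive perturbation. It loses its negative direction, restricted to $\{F_c=0\}$, exactly when
\[
1+2K\langle L^{-1}\ell_c,\ell_c\rangle<0,\qquad\text{that is,}\qquad 2K\sqrt{2-c^2}>1 .
\]
To see this I minimize $q(w)+2K\ell(w)^2$ at fixed $\ell(w)=\lambda$. The constrained minimum, computed via $L^{-1}\ell$, equals $\lambda^2/\langle L^{-1}\ell,\ell\rangle$, and adding $2K\lambda^2$ gives a coercive total precisely under $K>1/(2\sqrt{2-c^2})$. Combining this with the first inequality for the $\ell$-free part proves the claim.

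\emph{Main obstacle.} I expect the hardest part to be the rigorous bookkeeping of Step 3 in the non-self-adjoint-looking coordinates: making the decomposition and the gap estimate uniform in the $Y_{\rm d}$ norm, and checking that $\partial_c\mathbf q_c$ indeed lies in $Y_{\rm d}$. The sign computation $p'(c)=\sqrt{2-c^2}$ is the decisive input, but it is elementary.
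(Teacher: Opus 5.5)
Your proposal is correct and follows essentially the paper's route: the substitution $\tilde k=k+cr_c^{-3}h$ with the Sturm count for $L_c$, the identity $B_c(W_c^{\rm par},\cdot)=-\ell_c$ for $W_c^{\rm par}=\partial_c(r_c,v_c)$ with slope $p_c'=\sqrt{2-c^2}$ (which you obtain from $h'(c)=-c\,p_c'$ instead of integrating the momentum), and, for the penalized form, the paper's $B_c$-orthogonal splitting $w=w_0+(\ell_c(w)/p_c')\,W_c^{\rm par}$; using the spectral gap of $L_c$ directly, instead of the paper's G\aa{}rding-plus-weak-compactness step, is an equivalent standard variant. Two small repairs are needed: since $\mathbf t\in\ker B_c$, the set $\mathbf t^{\perp_B}$ is the whole space, so you should use an $L^2$ (or $F_c$-) complement of $\mathbf t$; and the representative of $L^{-1}\ell_c$ must be taken with $F_c=0$ (automatic for $W_c^{\rm par}$ by parity, as the paper notes) so that your first inequality applies to the $\ell$-free part $w_0$.
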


\begin{proof}
Consider $L_c=-\partial_x^2+V_c$ on real $L^2$, with domain $H^2$.
Its potential is smooth and bounded, and tends exponentially to
$2-c^2>0$ at both ends. Thus its essential spectrum is
$[2-c^2,\infty)$. Differentiating the profile equation gives
$L_cr_c'=0$. The exponentially decaying eigenfunction $r_c'$ has
exactly one zero, at the origin. The one-dimensional Sturm oscillation
theorem implies that zero is a simple eigenvalue and exactly one
simple negative eigenvalue lies below it.

The bounded invertible transformation
$(h,k)\mapsto(h,k+cr_c^{-3}h)$ in $Y_{\rm d}$ reduces
\eqref{nz:hessian} to the scalar form of $L_c$ and a positive square.
Consequently $q_{{\rm d},c}$ has negative index one, and the nullspace of its
bilinear form is spanned by $Z_c=(r_c',v_c')$.
Put $W_c^{\rm par}=\partial_c(r_c,v_c)$. Differentiating stationarity
and integrating the explicit momentum yield
\begin{gather}
 B_c(W_c^{\rm par},w)=-\ell_c(w),\label{nz:parameter-identity}\\
 p_c=P_{\rm d}(r_c,v_c)
       =\frac{c\gamma}{\sqrt2}
        -\operatorname{sgn}(c)\arccos\frac{|c|}{\sqrt2},
 \qquad p_c'=\sqrt{2-c^2}>0.\nonumber
\end{gather}
The parameter derivative is taken on either open nonzero-speed
interval. In particular,
$\ell_c(W_c^{\rm par})=p_c'$ and
$q_{{\rm d},c}(W_c^{\rm par})=-p_c'<0$.
If $\ell_c(w)=0$, then $w$ is $B_c$-orthogonal to this negative
direction. A negative value of $q_{{\rm d},c}(w)$ would give two independent
negative directions. Therefore $q_{{\rm d},c}\ge0$ on $\ker\ell_c$.

Suppose $w\in\ker\ell_c$ and $q_{{\rm d},c}(w)=0$. Nonnegativity applied to
$w+ts$, $s\in\ker\ell_c$, gives $B_c(w,s)=0$.
Hence $B_c(w,\cdot)=\lambda\ell_c$. Testing against
$W_c^{\rm par}$ and using \eqref{nz:parameter-identity} gives
$\lambda p_c'=0$. Thus $w$ belongs to the nullspace spanned by $Z_c$.
If also $F_c(w)=0$, then $w=0$, since
$F_c(Z_c)=\|r_c'\|_2^2>0$.

To obtain a uniform gap on $Y_0=\ker\ell_c\cap\ker F_c$, observe that
\eqref{nz:hessian} and $V_c\to2-c^2>0$ imply, for some $A,R,\kappa>0$,
\begin{equation}\label{nz:garding}
 q_{{\rm d},c}(w)+A\|h\|_{L^2(-R,R)}^2\ge\kappa\|w\|_{Y_{\rm d}}^2.
\end{equation}
If the asserted gap failed, a normalized sequence $w_n\in Y_0$
with $q_{{\rm d},c}(w_n)\to0$
has a weak subsequential limit $w$, with local strong $L^2$ convergence
of the first component. The positive form on the left of
\eqref{nz:garding} is weakly lower semicontinuous; subtracting the
convergent local term gives $q_{{\rm d},c}(w)\le0$. The limit belongs to $Y_0$,
so $q_{{\rm d},c}(w)=0$, while \eqref{nz:garding} forces $w\ne0$. This
contradicts the preceding paragraph and proves the first assertion.

Finally, $F_c(W_c^{\rm par})=0$ by parity. For $F_c(w)=0$, write
\[
 w=w_0+\frac{\ell_c(w)}{p_c'}W_c^{\rm par},\qquad w_0\in Y_0.
\]
The summands are $B_c$-orthogonal, and hence
\[
 q_{{\rm d},c}(w)+2K_c\ell_c(w)^2
       =q_{{\rm d},c}(w_0)+\left(2K_c-\frac1{p_c'}\right)\ell_c(w)^2.
\]
The decomposition is bounded in $Y_{\rm d}$, proving the second claim.
\end{proof}

\subsection{The bright-mass correction and global stability}

Choose $K_c$ as in Lemma~\ref{nz:scalar-coercivity}, set
$\Lambda_c=2+c^2/4$, and introduce the fixed functional
\begin{equation}\label{nz:lyapunov}
 \mathcal L_c=H+cP+\Lambda_cM+K_c(P-p_c)^2.
\end{equation}
The scalar squared-momentum correction follows the Lyapunov construction
in Chiron \cite[Theorem~12 and Appendix~A]{Chiron2013}, expressed in
our sign convention. The additional bright-mass term is used here
to control perturbations in the second component.
The functional $\mathcal L_c$ is conserved on every time interval on
which the dark component remains uniformly nonvanishing on compact time
subintervals. Its first
derivative vanishes at $\mathbf q_c$. Since the bright component of
the reference wave is zero, the Hessian has no mixed dark--bright
terms. Its bright part is
\begin{align}
 q_{{\rm b},c}(z)
 &=\int\bigl(|z_x|^2+c\operatorname{Im}(\bar z z_x)
                    +(\alpha r_c^2-1+\Lambda_c)|z|^2\bigr)\nonumber\\
 &=\int\left|z_x+\frac{ic}{2}z\right|^2
             +\int(\alpha r_c^2+1)|z|^2
       \ge\kappa_{{\rm b},c}\|z\|_{H^1}^2.\label{nz:bright}
\end{align}
The last inequality follows from $\alpha\ge0$ and
$\|z_x\|_2\le\|z_x+icz/2\|_2+|c|\|z\|_2/2$.
The independent mass conservation is essential here; the dark and
bright momenta need not be separately conserved.

Combining \eqref{nz:bright} with Lemma~\ref{nz:scalar-coercivity} and
Taylor's formula gives constants $b_c,B_c^{\rm up},\rho_c>0$ such that
\begin{equation}\label{nz:nonlinear-coercivity}
 b_c\|w\|_Y^2\le
       \mathcal L_c(\mathbf q_c+w)-\mathcal L_c(\mathbf q_c)
       \le B_c^{\rm up}\|w\|_Y^2
\end{equation}
whenever $w=(h,k,z)$, $\|w\|_Y<\rho_c$, and $\int hr_c'=0$.
Without the orthogonality condition the upper bound holds in absolute
value. Indeed, the quadratic term is
\[
 \frac12\bigl(q_{{\rm d},c}(h,k)+2K_c\ell_c(h,k)^2+q_{{\rm b},c}(z)\bigr),
\]
and the remainder is $O(\|w\|_Y^3)$ on a small ball. The latter follows
from $H^1\hookrightarrow L^\infty$, $L^2\cdot L^2\subset L^1$, and
the polynomial formulas \eqref{nz:energy}--\eqref{nz:momentum}.

\begin{proof}[Proof of Theorem~\ref{thm:stability} for $c\ne0$]
Let $\delta=d_c^{\rm orb}(\psi_0,\phi_0)$ be sufficiently small.
For $\delta>0$ choose an initial representative whose
$\mathfrak d_c$-distance is less than $2\delta$.
Lemma~\ref{nz:bridge} gives nonvanishing and
$\mathcal D_c(r_0,v_0,\phi_0)\le C_c\delta$. If $\delta=0$, the argument
below applies with any positive upper bound for $\delta$; the conclusion
then follows by letting that upper bound tend to zero.

We first describe the translation slice. Near $\mathbf q_c$ solve
\[
 \int\bigl(r(x+a)-r_c(x)\bigr)r_c'(x)\,dx=0.
\]
The derivative with respect to $a$ at the profile is
$\|r_c'\|_2^2>0$. The implicit function theorem gives a continuous
local choice with $|a|\le C_c\|r-r_c\|_{H^1}$. Transferring the
translation to the smooth test function justifies differentiation
without requiring a second derivative of $r$. By recentering at an
approximate nearest translate we obtain, uniformly in the small tube,
\begin{equation}\label{nz:slice}
 \|w\|_Y\le C_c\mathcal D_c(r,v,z),\qquad
 w=(r(\cdot+a)-r_c,v(\cdot+a)-v_c,z(\cdot+a)),\quad
 \int w_1r_c'=0.
\end{equation}
For the second component, translate its error isometrically and use
$\|v_c(\cdot+a)-v_c\|_2\le |a|\|v_c'\|_2$; no derivative of the
error $v-v_c$ is needed. The same reasoning applies to the first
component, while the bright $H^1$ norm is translation invariant.

Start the global energy-space solution supplied by
Theorem~\ref{thm:gwp}. As long as its hydrodynamical orbit distance
remains in a sufficiently small fixed tube, the Sobolev embedding
ensures $r\ge \underline r_c/2$. The coordinates and $\mathcal D_c$ are continuous there.
Continuity of the coordinates follows from their formulas:
the denominators have a common positive lower bound, derivatives
converge in $L^2$, and local uniform convergence together with a
tail truncation controls the bounded phase factors multiplying a
fixed $L^2$ derivative. The amplitude difference is controlled in
$L^2$ by the difference of squared amplitudes under the same lower
bound.
Moreover, the orbital distance satisfies
\[
 \bigl|\mathcal D_c(r,v,z)
       -\mathcal D_c(\widetilde r,\widetilde v,\widetilde z)\bigr|
 \le \bigl\|(r-\widetilde r,v-\widetilde v,z-\widetilde z)\bigr\|_Y.
\]
Indeed, for each fixed translation the triangle inequality and
translation invariance of the $Y$ norm give the corresponding
one-sided estimate. Taking infima over translations and interchanging
the two triples proves the claim, and hence continuity of
$\mathcal D_c$ without any compactness assumption on minimizing
translations.

On every such interval Proposition~\ref{prop:momentum} and
Theorem~\ref{thm:gwp} conserve \eqref{nz:lyapunov}.
Translation invariance, \eqref{nz:slice}, and
\eqref{nz:nonlinear-coercivity} give
\[
 b_c\|w(t)\|_Y^2
 \le\mathcal L_c(\psi_0,\phi_0)-\mathcal L_c(U_c,0)
 \le C_c\delta^2.
\]
Choose the initial threshold so that the improved estimate stays
strictly inside half the tube radius and gives
$\|r(t,\cdot+a)-r_c\|_\infty<\underline r_c/2$.
There is no first finite exit time: at a putative endpoint, global
existence, continuity, the uniform amplitude lower bound, and the
recentered translation chart extend the estimate. The same argument
works backward in time. Consequently
\[
 \sup_{t\in\mathbb R}\mathcal D_c(r(t),v(t),\phi(t))\le C_c\delta,
 \qquad \inf_{t,x}|\psi(t,x)|\ge \underline r_c/2.
\]
Applying the reverse estimate in Lemma~\ref{nz:bridge} to each
translated representative proves
$\sup_{t\in\mathbb R}d_c^{\rm orb}(\psi(t),\phi(t))\le C_c\delta$.
The speed $c$ remains fixed throughout, and no equality of initial
and soliton momenta has been assumed. Constants may deteriorate as
$c\to0$ or $|c|\to\sqrt2$; the zero-speed case requires the separate
argument below.
\end{proof}

\section{Stability at zero speed}\label{sec:black}

We prove the black-soliton case of Theorem~\ref{thm:stability}.
The scalar ingredients concerning modulation and the momentum
expansion are those of Gravejat--Smets
\cite[Propositions~2, 4 and~5]{GS2015}. They are applied only in
their stated neighbourhood of the black-soliton orbit. The coupled
extension uses the conserved bright mass, the augmented coercivity
of Proposition~\ref{prop:augmented-coercivity}, and the total momentum
of Proposition~\ref{prop:momentum}.

\subsection{Local coordinates and the energy-space distance}

For a perturbation $u=U_c+\epsilon$ write
\[
 \eta=|u|^2-|U_c|^2,
 \qquad
 D_c(\epsilon,w)^2=
  \|\epsilon\|_{H_c^1}^2+\|w\|_{H^1}^2+\|\eta\|_2^2.
\]
The weighted norms $H_c^1$ are uniformly equivalent when $c$ ranges
over a fixed small interval about zero. To see this, use the local
$H^1$ estimate and the fundamental theorem of calculus to obtain
\begin{equation}\label{black:pointwise}
 |f(x)|\le C\|f\|_{H_0^1}(1+|x|^{1/2}).
\end{equation}
Since the weights $Q_c$ have uniformly bounded moments of every
fixed order, this bounds $\int Q_c|f|^2$ by $C\|f\|_{H_0^1}^2$.
The converse follows in the same way, using a uniform positive lower
bound for $Q_c$ on $[-1,1]$.

We shall also use the local equivalence
\begin{equation}\label{black:distance-equivalence}
 D_c(\epsilon,w)\asymp
 \|\epsilon\|_{H_c^1}+\||U_c+\epsilon|-|U_c|\|_2+\|w\|_{H^1}.
\end{equation}
Here and below equivalence constants are uniform for small $|c|$,
in a sufficiently small neighbourhood of the profile.
Indeed, on $[-1,1]$ the modulus difference is bounded by $|\epsilon|$,
whereas outside this interval $|U_c|$ is bounded below. Thus
\[
 \||U_c+\epsilon|-|U_c|\|_2^2
 \le C\left(\int Q_c|\epsilon|^2+\|\eta\|_2^2\right).
\]
Conversely, put $q=|U_c+\epsilon|-|U_c|$. If
$\|\epsilon\|_{H_c^1}+\|q\|_2$ is small, then
\[
 \|q_x\|_2\le\|\epsilon_x\|_2+2\|U_c'\|_2\le C.
\]
The inequality $\|q\|_\infty^2\le2\|q\|_2\|q_x\|_2$ bounds
$|U_c+\epsilon|$ uniformly. Factoring $\eta$ now gives
$\|\eta\|_2\le C\|q\|_2$, as required.

For clarity, introduce the scalar metric used in \cite{GS2015},
\[
 d_0(u,U_0)^2=
  \|u-U_0\|_{H_0^1}^2+\||u|^2-|U_0|^2\|_2^2,
\]
and its orbital neighbourhood
\[
 \mathcal V_0(\rho)=
 \{u\in\mathcal E:\inf_{a,\theta}
                   d_0(e^{-i\theta}u(\cdot+a),U_0)<\rho\}.
\]
In particular, smallness in the distance of Theorem~\ref{thm:stability}
implies membership in this neighbourhood; it is not necessary to
assume smallness in $L^\infty$ of the complex perturbation on the
whole line.

\begin{lemma}[Static modulation near the black orbit]
\label{lem:black-modulation}
There are $\rho_0,C>0$ such that, locally near the black orbit, one can
choose continuous parameters $a,c,\theta$ with
\[
 \epsilon=e^{-i\theta}u(\cdot+a)-U_c,
 \qquad w=v(\cdot+a),
\]
satisfying
\begin{equation}\label{black:orthogonality}
 \int R_c'\epsilon_1=\int R_c'\epsilon_2
       =\int R_cQ_c\epsilon_2=0.
\end{equation}
If $\mathfrak d_0(u,v)=\delta<\rho_0$, the parameters near
$(a,c,\theta)=(0,0,0)$ satisfy
\begin{equation}\label{black:initial-modulation}
 |a|+|c|+|\theta|+D_c(\epsilon,w)\le C\delta.
\end{equation}
The same assertion holds around any translated and phase-rotated
profile, in the corresponding centered coordinates.
\end{lemma}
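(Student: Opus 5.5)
The plan is to obtain the parameters $(a,c,\theta)$ from the implicit function theorem applied to the three orthogonality functionals. Define
\[
 G(u;a,c,\theta)=\Bigl(\int R_c'\epsilon_1,\ \int R_c'\epsilon_2,\ \int R_cQ_c\epsilon_2\Bigr),
 \qquad \epsilon=e^{-i\theta}u(\cdot+a)-U_c .
\]
Only the dark component enters, and $w=v(\cdot+a)$ follows once $a$ is fixed. The weights $R_c'$ and $R_cQ_c$ decay exponentially in $x$. By the pointwise bound \eqref{black:pointwise}, $G$ is well defined and continuous on $\mathcal V_0(\rho)\times(\text{small box})$ for the distance $d_0$, even though $\epsilon$ is only locally in $L^2$. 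To differentiate in $a$ without assuming $\partial_x\epsilon\in L^2$ with weights, I will move the translation onto the test functions, writing $\int R_c'(x)\,u(x+a)\,dx=\int R_c'(y-a)\,u(y)\,dy$. This makes $G$ of class $C^1$ in $(a,c,\theta)$, uniformly for $u$ near $U_0$, and continuous in $u$.

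Next I compute the Jacobian at $(u;a,c,\theta)=(U_0;0,0,0)$. Since $U_0=R_0$ is real and $I_c=-c/\sqrt2$, the relevant derivatives are
\[
 \partial_a\epsilon=R_0',\qquad \partial_\theta\epsilon=-iR_0,\qquad \partial_c\epsilon=-\partial_cU_c\big|_{c=0}=\frac{i}{\sqrt2}+(\text{real odd term, since }\partial_c\gamma_c=0\text{ at }c=0).
\]
Pairing these gives a triangular matrix. In the first row, $\partial_a$ yields $\|R_0'\|_2^2>0$. The $\theta$ and $c$ entries vanish there, because $-iR_0$ is imaginary and the real part of $\partial_c U_c$ vanishes at $c=0$. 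In the second row, $\int R_0'\cdot(-R_0)=0$ by parity, and $\partial_c$ gives $\frac1{\sqrt2}\int R_0'\neq0$. In the third row, $\partial_\theta$ gives $-\int R_0^2Q_0\neq0$, and $\partial_c$ gives $\frac1{\sqrt2}\int R_0Q_0=0$ by oddness. The determinant is therefore nonzero.

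The implicit function theorem, in the version with a $C^1$ dependence on the finite-dimensional parameters and only continuous dependence on $u$, then produces a unique continuous choice near $(0,0,0)$. It also gives $|a|+|c|+|\theta|\le C\,|G(u;0,0,0)|\le C\|u-U_0\|_{H^1_0}$, and by a first-order Taylor estimate this is at most $C\delta$. For data near a translated and rotated profile, I will first compose with the near-optimal $(a_0,\theta_0)$ and then apply the same construction. This handles the orbital neighbourhood $\mathcal V_0(\rho_0)$ and the statement about general orbit points.

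It remains to bound $D_c(\epsilon,w)$. I will write
\[
 \epsilon=e^{-i\theta}\bigl(u(\cdot+a)-U_0(\cdot+a)\bigr)+\bigl(e^{-i\theta}U_0(\cdot+a)-U_c\bigr).
\]
The second term is smooth in the parameters, and its $H^1_c$ norm and density defect are $O(|a|+|c|+|\theta|)$; note that $U_0-U_c$ is not in $L^2$, but it is in the weighted space. For the first term I use the uniform equivalence of the $H^1_c$ norms for small $|c|$. The weighted norm is not translation invariant, but \eqref{black:pointwise} combined with $|a|\le1$ shows it changes only by a constant factor under a bounded shift. The density defect and the bright norm are translation invariant up to the $O(|a|+|c|)$ profile change. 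Together with \eqref{black:distance-equivalence}, these give $D_c\le C\delta$.

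I expect the main technical point to be the uniform $C^1$ regularity of $G$ and the lack of translation invariance of $H^1_c$. Both are handled by the test-function transfer and by \eqref{black:pointwise}.
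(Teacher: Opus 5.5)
Your proposal is correct and follows essentially the paper's proof: the implicit function theorem applied to the three orthogonality functionals, with the translation moved onto the exponentially decaying weights; a nondegenerate Jacobian at $(U_0;0,0,0)$; and the density bound obtained by splitting $\eta$ into the translated $\eta_0$ plus $O(|a|+c^2)$ profile changes. The Jacobian is in fact diagonal, $\operatorname{diag}(2\sqrt2/3,\sqrt2,-2\sqrt2/3)$, because the real part of $\partial_cU_c$ vanishes identically at $c=0$; your only additional detail is the explicit control of the weighted norm under a bounded shift, which the paper leaves implicit in its implicit-function bound for $\|\epsilon\|_{H_0^1}$.
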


\begin{proof}
Let $F$ be the three left-hand sides of
\eqref{black:orthogonality}, with
$\epsilon=e^{-i\theta}u(\cdot+a)-U_c$.
The exponentially decaying weights make $F$ continuously
differentiable as a function of the weighted perturbation and the
three real parameters. Differentiating in $a$ is justified by moving
the translation onto the smooth weight, so no second derivative of
$u$ is needed. At $(u,a,c,\theta)=(U_0,0,0,0)$,
\[
 D_{(a,c,\theta)}F=
 \begin{pmatrix}
 2\sqrt2/3&0&0\\
 0&\sqrt2&0\\
 0&0&-2\sqrt2/3
 \end{pmatrix}.
\]
The implicit function theorem gives the parameters and the bound
for $|a|+|c|+|\theta|+\|\epsilon\|_{H_0^1}$.
The change from the sign convention of \cite[Proposition~2]{GS2015}
is just complex conjugation.

To verify the density bound explicitly, set
$\eta_0=|u|^2-|U_0|^2$. By
\eqref{black:distance-equivalence}, $\|\eta_0\|_2\le C\delta$.
Moreover,
\[
 \eta=\eta_0(\cdot+a)
      +|U_0(\cdot+a)|^2-|U_0|^2
      +|U_0|^2-|U_c|^2.
\]
The middle term has $L^2$ norm at most
$|a|\|\partial_x|U_0|^2\|_2$, and the last term is $O(c^2)$ in
$L^2$, by the explicit profile formula. This proves
\eqref{black:initial-modulation}, since translation preserves the
bright $H^1$ norm. Local uniqueness allows the parameters to be
continued along a continuous solution while it stays in the tube.
This is the static modulation construction of
\cite[Proposition~2]{GS2015}; no modulation differential equation
is used below.
\end{proof}

\subsection{A local real lift and its quadratic expansion}

For a modulated function $u_{\mathrm m}=U_c+\epsilon$ in the preceding
neighbourhood, $|u_{\mathrm m}|$ is bounded away from zero outside
$[-1,1]$. One elementary justification is to write
$\eta_0=|u_{\mathrm m}|^2-|U_0|^2$: smallness of
$d_0(u_{\mathrm m},U_0)$ implies $\|\eta_0\|_2\to0$, while
$\|\partial_x\eta_0\|_2$ stays bounded, by
\eqref{black:distance-equivalence} and the energy bound. Thus
$\|\eta_0\|_\infty\to0$. On the two exterior components write
$u_{\mathrm m}=|u_{\mathrm m}|e^{i\vartheta_{\rm m}}$, fixing the branches by
\[
 |\vartheta_{\rm m}-\pi|<\pi/2\quad\hbox{on }[-2,-1],
 \qquad |\vartheta_{\rm m}|<\pi/2\quad\hbox{on }[1,2].
\]
These choices are possible and unique by the local uniform
control in \eqref{black:pointwise}. Choose a smooth cutoff $\chi$,
equal to one on $[-1,1]$ and zero outside $(-2,2)$, and put
\begin{equation}\label{black:lift}
 \mathcal P(u_{\rm m})=
 \frac12\int\left\{
  \operatorname{Im}(\bar u_{\rm m}\partial_xu_{\rm m})
       -\partial_x[(1-\chi)\vartheta_{\rm m}]\right\}.
\end{equation}
The integrand is integrable, since outside $[-2,2]$ it equals
$(|u_{\rm m}|^2-1)\vartheta_{\rm m}'$. Formula
\eqref{mom:finite-core} shows that
$\mathcal P(u_{\rm m})=[P_{\rm d}](u_{\rm m})\pmod\pi$.
It is a continuous real lift on this neighbourhood, as also shown
in \cite[Proposition~4]{GS2015}.

We use the following precise consequence of
\cite[Proposition~5, equation~(29)]{GS2015}:
\begin{equation}\label{black:momentum-expansion}
 \mathcal P(U_c+\epsilon)
 =p(c)-\int R_c'\epsilon_2+\mathcal R_c(\epsilon),
 \qquad
 |\mathcal R_c(\epsilon)|
 \le A\bigl(\|\epsilon\|_{H_0^1}^2+\|\eta\|_2^2\bigr).
\end{equation}
Its hypotheses are exactly that the function belongs to a
sufficiently small $\mathcal V_0$ and that the parameters are those
of the static modulation above. More explicitly, conjugating the
convention in that paper gives
$\mathcal P_{\rm here}(u)=\pi-\mathcal P_{\rm GS}(\bar u)$;
the linear term in \eqref{black:momentum-expansion} has the displayed
minus sign. Direct evaluation on our profiles gives
\begin{equation}\label{black:profile-momentum}
 p(c)=\frac\pi2+\arctan\frac{c}{\sqrt{2-c^2}}
                +\frac c2\sqrt{2-c^2},
 \qquad p'(c)=\sqrt{2-c^2}.
\end{equation}
Adding $\frac12\int\operatorname{Im}(\bar w w_x)$ to
\eqref{black:lift} defines a continuous real lift $\mathscr P$ of the
total momentum. By \eqref{black:orthogonality},
\eqref{black:momentum-expansion}, and Cauchy--Schwarz,
\begin{equation}\label{black:momentum-error}
 |\mathscr P-p(c)|\le A_*D_c(\epsilon,w)^2.
\end{equation}
Along a continuous solution in this tube, Proposition~\ref{prop:momentum}
implies conservation of $\mathscr P$: a continuous real function
whose class modulo $\pi$ is constant must itself be constant.

\subsection{The conserved-functional argument}

Let $\delta=d_0^{\mathrm{orb}}(\psi_0,\phi_0)$ be sufficiently small.
For $\delta>0$, choose a fixed initial translation and phase rotation
so that $\mathfrak d_0(\psi_0,\phi_0)<2\delta$.
If $\delta=0$, apply the argument with any positive upper bound for
$\delta$ and then let that bound tend to zero.
Theorem~\ref{thm:gwp} supplies a global solution.
First apply Lemma~\ref{lem:black-modulation}, and continue its
parameters on the connected interval containing zero on which the
modulated solution remains in a fixed small tube. Write
$D(t)=D_{c(t)}(\epsilon(t),w(t))$. By
\eqref{black:initial-modulation},
\begin{equation}\label{black:initial-size}
 D(0)+|c(0)|\le C\delta.
\end{equation}

Set $J=H+2M$ and
$h(c)=J(U_c,0)=\frac13(2-c^2)^{3/2}$.
This is a conserved functional, whereas $h(c(t))$ is not constant.
Proposition~\ref{prop:augmented-coercivity} gives fixed constants
$\kappa,B>0$ such that
\[
 \kappa D(t)^2\le J(\psi(t),\phi(t))-h(c(t)),
 \qquad
 |J(\psi_0,\phi_0)-h(c(0))|\le B D(0)^2.
\]
Since $h(c(0))\le h(0)$, conservation and
$0\le h(0)-h(c)\le Cc^2$ imply
\begin{equation}\label{black:energy-bootstrap}
 D(t)^2\le C_1\delta^2+C_2c(t)^2.
\end{equation}
This estimate retains the change in the energy of the reference profile.

Fix the tube so that $|c(t)|<s<1$. Using conservation of
$\mathscr P$, \eqref{black:momentum-error}, and
$p'(c)\ge\sqrt{2-s^2}$ gives
\[
 |c(t)-c(0)|
 \le\frac{A_*}{\sqrt{2-s^2}}\bigl(D(t)^2+D(0)^2\bigr).
\]
Thus, by \eqref{black:initial-size},
\begin{equation*}
 |c(t)|\le C_3\delta+C_4D(t)^2.
\end{equation*}
Substitution into \eqref{black:energy-bootstrap} yields
$D(t)^2\le C_5\delta^2+C_6D(t)^4$.
Choose the tube's perturbation radius $r$ so small that
$C_6r^2\le1/2$. As long as $D(t)<r$,
\begin{equation}\label{black:closed-bootstrap}
 D(t)\le\sqrt{2C_5}\,\delta,
 \qquad |c(t)|\le C_7\delta.
\end{equation}
All tube radii and constants are fixed before choosing $\delta$.

To return to the fixed black profile, the explicit formula gives
\[
 \|U_c-U_0\|_{H_0^1}\le C|c|,
 \qquad \||U_c|^2-|U_0|^2\|_2\le Cc^2.
\]
The triangle inequality for the derivative and weighted terms,
followed by \eqref{black:distance-equivalence} centered at $U_0$,
therefore gives
\begin{equation}\label{black:fixed-orbit}
 \mathfrak d_0(e^{-i\theta(t)}\psi(t,\cdot+a(t)),
                         \phi(t,\cdot+a(t)))
 \le C\bigl(D(t)+|c(t)|\bigr)\le C\delta.
\end{equation}
In particular, the dark component remains in the precise
$\mathcal V_0$ required for \eqref{black:momentum-expansion}.

For completeness, we check that continuing the modulation introduces
no finite-time obstruction. If its interval had a finite endpoint $T$,
global-flow continuity would give convergence of $\psi_x(t)$ in
$L^2$ as $t\to T$. When $r$ is small, the decomposition
\[
 e^{-i\theta(t)}\psi_x(t,\cdot+a(t))=U_{c(t)}'+\epsilon_x(t)
\]
forces a fixed positive amount of derivative mass into a uniformly
bounded neighbourhood of the centre $a(t)$. Tightness of
$\psi_x(t)$ in $L^2$ then keeps $a(t)$ bounded as $t\to T$.
After passing to a subsequence, the parameters $a(t),c(t)$ and
$e^{i\theta(t)}$ converge. Continuity gives the corresponding
decomposition at $T$, still strictly inside the tube by
\eqref{black:closed-bootstrap} when $\delta$ is small enough.
The implicit function theorem at this limiting decomposition, using
a local angular coordinate for $e^{i\theta}$, supplies a unique nearby
parameter branch for times near $T$, including times beyond $T$.
Choose a smaller parameter neighbourhood whose closure lies in the
uniqueness neighbourhood. By continuity, the new branch stays
strictly inside the smaller neighbourhood for times sufficiently
close to $T$. The original continuous branch enters that smaller
neighbourhood along the convergent subsequence. It cannot subsequently
leave: at a first boundary crossing it would still lie in the
uniqueness neighbourhood and hence coincide with the new branch,
which lies strictly inside. Thus the branches agree on a terminal
time interval, extending the original branch past $T$, a contradiction.
The same reasoning applies backwards in time.
Consequently \eqref{black:fixed-orbit} holds for all $t\in\mathbb R$,
which proves the zero-speed part of Theorem~\ref{thm:stability}.

The use of $J=H+2M$ is essential for removing the unaugmented bright
energy threshold from this argument: its bright quadratic operator
is $-\partial_x^2+1+\alpha|U_c|^2$, which is strictly positive.
No evenness or oddness restriction has been imposed on the data.

\section{Explicit dark--bright waves and an open stability problem}
\label{sec:dark-bright-open}

The open problem in Remark~\ref{rem:dark-bright-open} can be posed for
an explicit family. Hyperbolic-function dark--bright profiles in
coupled Schr\"odinger equations are studied, for example, in
\cite{Yan2015,MaoZhao2022}. We record the family in the normalization
and phase convention of \eqref{eq:system}, and verify the formulas
directly. No stability conclusion for this family is asserted below.

\subsection{The explicit family and its admissible parameters}

In this section we impose the additional assumptions
\[
 \alpha>1,\qquad \beta>\alpha^2.
\]
They ensure the admissibility of the explicit family below; they are
not needed for stability of the GP branch in Theorem~\ref{thm:stability}.
Set
\begin{equation}\label{db:parameters}
\begin{aligned}
 c_{\rm DB}^2&=\frac{2(\beta-\alpha^2)}{\beta-\alpha},
 &b=b(c)&=\frac12\sqrt{c_{\rm DB}^2-c^2},
 \qquad |c|<c_{\rm DB},\\
 A_1&=\frac{2b}{c_{\rm DB}},
 &B_1&=-\frac{c}{c_{\rm DB}},\\
 A_2&=b\sqrt{\frac{2(\alpha-1)}{\beta-\alpha^2}},
 &\nu_*&=\alpha-1-\frac{c_{\rm DB}^2}{4}.
\end{aligned}
\end{equation}
Here $c_{\rm DB}$ is the positive square root. The assumptions imply
$\beta>\alpha^2>\alpha$, and hence $0<c_{\rm DB}<\sqrt2$.
For every admissible speed, $b,A_1,A_2$ are strictly positive.
Define
\begin{equation}\label{db:profiles}
\begin{aligned}
 U_{d,c}(x)&=A_1\tanh(bx)+iB_1,\\
 W_{{\rm DB},c}(x)&=A_2\sech(bx),\qquad
 V_{b,c}(x)=e^{-icx/2}W_{{\rm DB},c}(x).
\end{aligned}
\end{equation}
Then the exact solution is
\begin{equation}\label{db:solution}
 \psi(t,x)=U_{d,c}(x-ct),\qquad
 \phi(t,x)=e^{i\nu_*t}V_{b,c}(x-ct).
\end{equation}
In particular, $V_{b,c}\not\equiv0$, $|U_{d,c}(x)|\to1$ and
$V_{b,c}(x)\to0$ as $|x|\to\infty$.

To check the formulas, a travelling--standing wave
$(U(x-ct),e^{i\nu t}V(x-ct))$ must satisfy
\begin{equation}\label{db:profile-equations}
\begin{aligned}
 U''+icU'+(1-|U|^2-\alpha|V|^2)U&=0,\\
 V''+icV'+(1+\nu-\alpha|U|^2-\beta|V|^2)V&=0.
\end{aligned}
\end{equation}
Substitution of \eqref{db:profiles} reduces these equations to
\begin{equation*}
\begin{aligned}
 A_1^2+B_1^2&=1,&
 A_1^2-\alpha A_2^2&=2b^2,\\
 \alpha A_1^2-\beta A_2^2&=2b^2,&
 2bB_1+cA_1&=0,\\
 b^2+1+\nu+c^2/4-\alpha&=0.
\end{aligned}
\end{equation*}
Indeed, the dark equation uses
$U'=A_1b\sech^2(bx)$ and
$U''=-2A_1b^2\tanh(bx)\sech^2(bx)$.
For the bright equation the essential cancellation is
\[
\begin{aligned}
 V''+icV'&=e^{-icx/2}\bigl(W_{{\rm DB},c}''+c^2W_{{\rm DB},c}/4\bigr),\\
 W_{{\rm DB},c}''&=b^2W_{{\rm DB},c}
                  -2b^2\sech^2(bx)W_{{\rm DB},c}.
\end{aligned}
\]
The two amplitude equations give
\[
 A_1^2=\frac{2b^2(\beta-\alpha)}{\beta-\alpha^2},
 \qquad A_2^2=\frac{2b^2(\alpha-1)}{\beta-\alpha^2},
 \qquad B_1=-\frac{cA_1}{2b}.
\]
Using $A_1^2+B_1^2=1$ now gives \eqref{db:parameters}, including
$\nu=\nu_*$. Moreover,
\[
 1-|U_{d,c}|^2=A_1^2\sech^2(bx),
\]
so $(U_{d,c},V_{b,c})\in\mathcal X$.

The spatial phase in $V_{b,c}$ cannot be dropped when $c\ne0$:
otherwise the term $icW_{{\rm DB},c}'$ in the bright profile equation would
remain. The frequency of the real-envelope equation is
\begin{equation*}
\begin{aligned}
 \mu_c&=\nu_*+\frac{c^2}{4}=\alpha-1-b(c)^2,\\
 -\mu_cW_{{\rm DB},c}&=W_{{\rm DB},c}''
   +(1-\alpha|U_{d,c}|^2-\beta W_{{\rm DB},c}^2)W_{{\rm DB},c}.
\end{aligned}
\end{equation*}
Thus $\nu_*$, the frequency multiplying $t$ in
\eqref{db:solution}, is independent of $c$, whereas $\mu_c$ is not.
Equivalently, in laboratory coordinates the bright component is
$e^{i(\nu_*+c^2/2)t-icx/2}W_{{\rm DB},c}(x-ct)$.

At $c=0$ these formulas give
\[
 U_{d,0}(x)=\tanh(c_{\rm DB}x/2),\qquad
 V_{b,0}(x)=\sqrt{\frac{\alpha-1}{\beta-\alpha}}\,
                         \sech(c_{\rm DB}x/2).
\]
This is a standing dark--bright wave, with the temporal bright phase
$e^{i\nu_*t}$ still present. As $c\to\pm c_{\rm DB}$, the width
parameter $b$ and both amplitudes $A_1,A_2$ tend to zero:
$U_{d,c}\to\mp i$ uniformly and $V_{b,c}\to0$ in $H^1$.
The endpoints are therefore constant backgrounds, not nontrivial
localized members of the family.

\subsection{The relevant conserved action}

Write $P_{\rm b}(w)=\frac12\int\operatorname{Im}(\bar w w_x)$.
The spatial phase has the following exact effects:
\begin{equation*}
\begin{aligned}
 M(V_{b,c})&=\frac{A_2^2}{b},\qquad
 P_{\rm b}(V_{b,c})=-\frac c2M(V_{b,c}),\\
 H(U_{d,c},V_{b,c})&=H(U_{d,c},W_{{\rm DB},c})
                    +\frac{c^2}{4}M(V_{b,c}).
\end{aligned}
\end{equation*}
Thus a calculation with the real envelope alone would omit
the phase-gradient contribution to the physical energy.

For a fixed admissible $c$, the appropriate travelling-wave action is
\begin{equation}\label{db:action}
 \mathcal S_c=H+cP-\nu_*M,
\end{equation}
where $P$ is a local real lift of the \emph{total} untwisted momentum.
For $c\ne0$, the explicit profile satisfies
$\inf_{\R}|U_{d,c}|=|c|/c_{\rm DB}>0$, so the real-valued momentum
\eqref{mom:nonvanishing} can be used in a nonvanishing neighbourhood
of this fixed profile. At $c=0$, a local lift can instead be chosen
by fixing the exterior phase branches in \eqref{mom:finite-core}.
The action itself then reduces to $\mathcal S_0=H-\nu_*M$ and needs
no momentum lift; such a lift is still required when using real
momentum constraints or squared momentum penalties at zero speed.
The chosen lift is conserved along a solution while the solution remains
in that chart, by Theorem~\ref{thm:gwp} and Proposition~\ref{prop:momentum}.
The first variation of $P$ on affine $H^1$ charts is
$DP(U,V)=(-iU',-iV')$. By \eqref{db:profile-equations},
\[
\begin{aligned}
 DH(U_{d,c},V_{b,c})
   &=\bigl(icU_{d,c}',\,icV_{b,c}'+\nu_*V_{b,c}\bigr),\\
 D\mathcal S_c(U_{d,c},V_{b,c})&=0.
\end{aligned}
\]
These identities specify the first variations against $H^1$
perturbations, rather than assuming a linear structure on all of
$\mathcal X$.

It is important here to use the total momentum. For smooth solutions,
the two components exchange momentum according to
\[
 \frac{d}{dt}P_{\rm b}(\phi(t))
   =\frac{\alpha}{2}\int\partial_x|\psi|^2\,|\phi|^2.
\]
Consequently $H+cP_{\rm b}-\nu_*M$ is generally not conserved when
$c\ne0$, even though it can be convenient for static calculations.
Furthermore, replacing $\mathcal S_c$ by $\mathcal S_c+\kappa M$
with $\kappa\ne0$ destroys stationarity at this fixed profile:
$DM(V_{b,c})=V_{b,c}\ne0$. The bright-mass correction used for the
GP branch cannot therefore be transferred without modification.
Conserved squared penalties
$K_P(P-P_*)^2+K_M(M-M_*)^2$, with $P_*,M_*$ equal to the profile
values, do preserve stationarity. They are possible tools for a
constrained argument, but adding them does not by itself prove
coercivity of the coupled Hessian.

One can see the remaining quadratic-form issue explicitly.
For $\epsilon,z\in H^1(\R;\C)$ write
\[
\begin{gathered}
 U=U_{d,c},\quad W=W_{{\rm DB},c},\quad
 \omega=e^{-icx/2}z,\quad q_{{\rm DB},c}=2b^2\sech^2(bx),\\
 \eta=2\operatorname{Re}(\bar U\epsilon)+|\epsilon|^2,\quad
 \zeta=2W\operatorname{Re}z+|z|^2.
\end{gathered}
\]
Expanding \eqref{db:action}, with a consistent local real momentum
lift, gives
\begin{equation}\label{db:action-expansion}
\begin{aligned}
 &\mathcal S_c(U+\epsilon,V_{b,c}+\omega)
                   -\mathcal S_c(U,V_{b,c})\\
 &\quad=\frac12\int\bigl(
       |\epsilon_x|^2-q_{{\rm DB},c}|\epsilon|^2
          +c\operatorname{Im}(\bar\epsilon\epsilon_x)\bigr)\\
 &\qquad\quad+\frac12\int\bigl(|z_x|^2+(b^2-q_{{\rm DB},c})|z|^2\bigr)\\
 &\qquad\quad+\frac14\int(\eta^2+2\alpha\eta\zeta+\beta\zeta^2).
\end{aligned}
\end{equation}
The identity follows from \eqref{mom:affine} and the square completion
\[
 \frac12\int|(e^{-icx/2}f)_x|^2+cP_{\rm b}(e^{-icx/2}f)
       =\frac12\int|f_x|^2-\frac{c^2}{8}\int|f|^2.
\]
In particular, the dark momentum contributes the indefinite term
$\frac c2\int\operatorname{Im}(\bar\epsilon\epsilon_x)$, and the
quadratic part contains the mixed term
$2\alpha\int W\operatorname{Re}(\bar U\epsilon)\operatorname{Re}z$.
Thus the quadratic form does not split into the scalar GP form and
an independent positive bright form. Identity
\eqref{db:action-expansion} is stated on affine $H^1$ charts only;
a stability proof for the full orbital energy topology must also
justify the required coordinate and coercivity estimates there.

The neutral bright-phase direction can also be read off explicitly.
On real $L^2(\R)$ with domain $H^2(\R)$, the bright scalar operator
factorizes as
\[
\begin{aligned}
 L_{{\rm b},c}
 &= -\partial_x^2+b^2-q_{{\rm DB},c}\\
 &=\bigl(-\partial_x+b\tanh(bx)\bigr)
   \bigl(\partial_x+b\tanh(bx)\bigr).
\end{aligned}
\]
Hence $L_{{\rm b},c}\ge0$ and
$\ker L_{{\rm b},c}=\operatorname{span}\{W\}$, since the first-order
equation for its kernel gives multiples of $\sech(bx)$.
The full action Hessian annihilates $(\epsilon,z)=(0,iW)$:
the mixed and remaining quadratic terms vanish because
$\operatorname{Re}z=0$. In the original variables this is the tangent
$(0,iV_{b,c})$ to the independent bright-phase orbit. Thus coercivity
must account for this neutral direction, which explains the second
phase parameter in \eqref{db:distance}.
This is a statement about the Hessian, not the full nonlinear
increment: for real $\tau$, \eqref{db:action-expansion} gives
\[
 \mathcal S_c(U,(1+i\tau)V_{b,c})-\mathcal S_c(U,V_{b,c})
   =\frac{\beta\tau^4}{4}\int W^4,
\]
whereas the exact phase perturbation $z=(e^{i\tau}-1)W$ leaves
the action unchanged.

\subsection{A precise orbital-stability question}

Fix $R_{\rm loc}>0$ and define the centered distance to the explicit
profile by
\begin{equation}\label{db:distance}
\begin{aligned}
 \rho_{R_{\rm loc},c}(\psi,\phi)
 ={}&\|\psi-U_{d,c}\|_{L^\infty([-R_{\rm loc},R_{\rm loc}])}
      +\|\psi_x-U_{d,c}'\|_2\\
    &+\big\||\psi|^2-|U_{d,c}|^2\big\|_2
      +\|\phi-V_{b,c}\|_{H^1},\\
 \rho_{R_{\rm loc},c}^{\rm orb}(\psi,\phi)
 ={}&\inf_{a,\theta_1,\theta_2\in\R}
   \rho_{R_{\rm loc},c}\bigl(e^{-i\theta_1}\psi(\cdot+a),
                  e^{-i\theta_2}\phi(\cdot+a)\bigr).
\end{aligned}
\end{equation}
The independent bright phase accounts for the factor $e^{i\nu_*t}$
in the exact solution. The speed in \eqref{db:distance} is fixed.

The concrete open problem is to determine for which
$\alpha>1$, $\beta>\alpha^2$ and $|c|<c_{\rm DB}$ the following
orbital-stability assertion holds: for every $\varepsilon>0$ there
exists $\delta>0$ such that, for all $(\psi_0,\phi_0)\in\mathcal X$,
\[
 \rho_{R_{\rm loc},c}^{\rm orb}(\psi_0,\phi_0)<\delta
 \quad\Longrightarrow\quad
 \sup_{t\in\R}\rho_{R_{\rm loc},c}^{\rm orb}(\psi(t),\phi(t))<\varepsilon.
\]
The global flow is already supplied by Theorem~\ref{thm:gwp}.
The unresolved part is stability under unrestricted two-component
perturbations, not the existence of these explicit waves. Along a
variational route one must establish suitable constrained coercivity
of the coupled action and control the neutral symmetry directions.
The explicit family has the single parameter $c$ and the fixed
frequency $\nu_*$; by itself it does not supply a two-parameter
speed--frequency family for a general two-constraint slope criterion.
Comparing the energies of the GP and dark--bright branches alone
does not resolve this question. We leave the rigorous analysis open
here; however, we present numerical simulations
(Figures~\ref{fig:gaussian_perturbation_1}--\ref{fig:dark_bright_alpha4_beta20_c0})
suggesting that the dark (black)--bright soliton is not only orbitally
stable but also asymptotically stable in some sense.

\paragraph{AI assistance statement.}
The authors used AI tools to assist with manuscript editing and checking mathematical arguments they developed. All proofs were independently verified by
the authors, who take full responsibility for the mathematical correctness and final content of the
manuscript.

\begin{figure}
    \centering
    {\large\textbf{Dark--bright soliton: numerical vs.\ exact profiles, $\alpha = 4$, $\beta = 20$, $c = 0.5$}}\\[0.5em]
    \includegraphics[width=\textwidth]{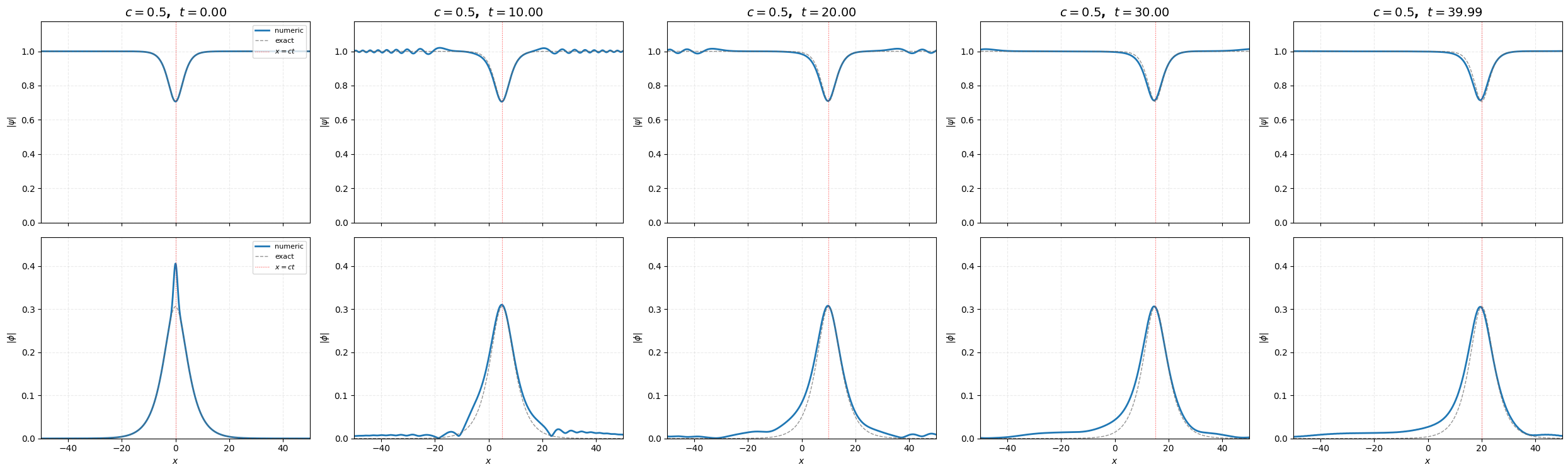}
    \caption{Numerical solution of the system for $\alpha = 4$, $\beta = 20$ and $c = 0.5$ with a Gaussian perturbation of the initial data, $u_i \exp\!\left(-x^{2}/\sigma^{2}\right)$, where $\sigma = 1$, $u_i = 0.5$ for $\psi$ and $u_i = 0.05$ for $\phi$. Solid: numerical solution; dashed: exact profile. Top: $|\psi|$; bottom: $|\phi|$.}
    \label{fig:gaussian_perturbation_1}
\end{figure}

\begin{figure}
    \centering
    {\large\textbf{Dark--bright soliton, $\alpha = 4$, $\beta = 20$, $c = 0.5$}}\\[0.5em]
    \includegraphics[width=\textwidth]{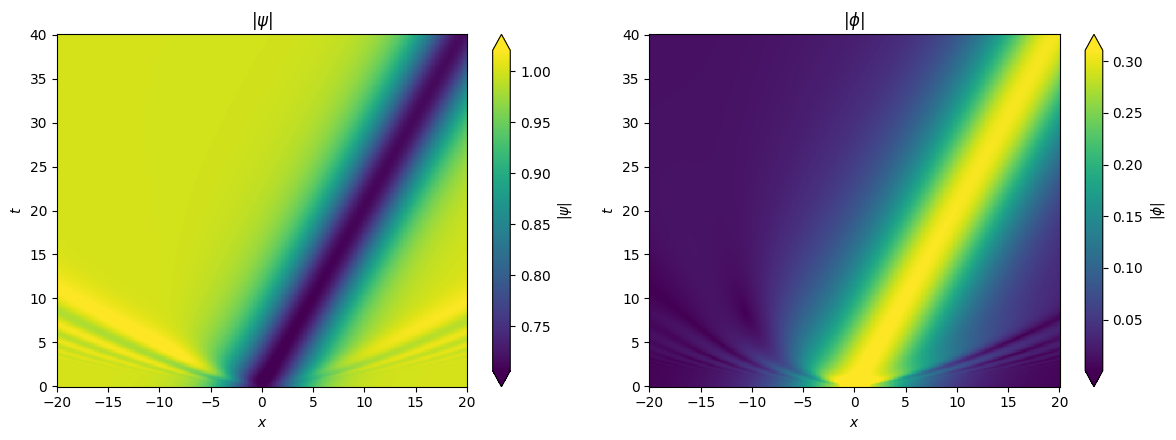}
    \caption{Numerical solution of the system for $\alpha = 4$, $\beta = 20$ and $c = 0.5$ with a Gaussian perturbation of the initial data, $u_i \exp\!\left(-x^{2}/\sigma^{2}\right)$, where $\sigma = 1$, $u_i = 0.5$ for $\psi$ and $u_i = 0.05$ for $\phi$. Left: $|\psi|$; right: $|\phi|$, as functions of $x$ and $t$.}
    \label{fig:dark_bright_alpha4_beta20_c05}
\end{figure}

\begin{figure}
    \centering
     {\large\textbf{Black--bright soliton: numerical vs.\ exact profiles, $\alpha = 4$, $\beta = 20$, $c = 0$}}\\[0.5em]
    \includegraphics[width=\textwidth]{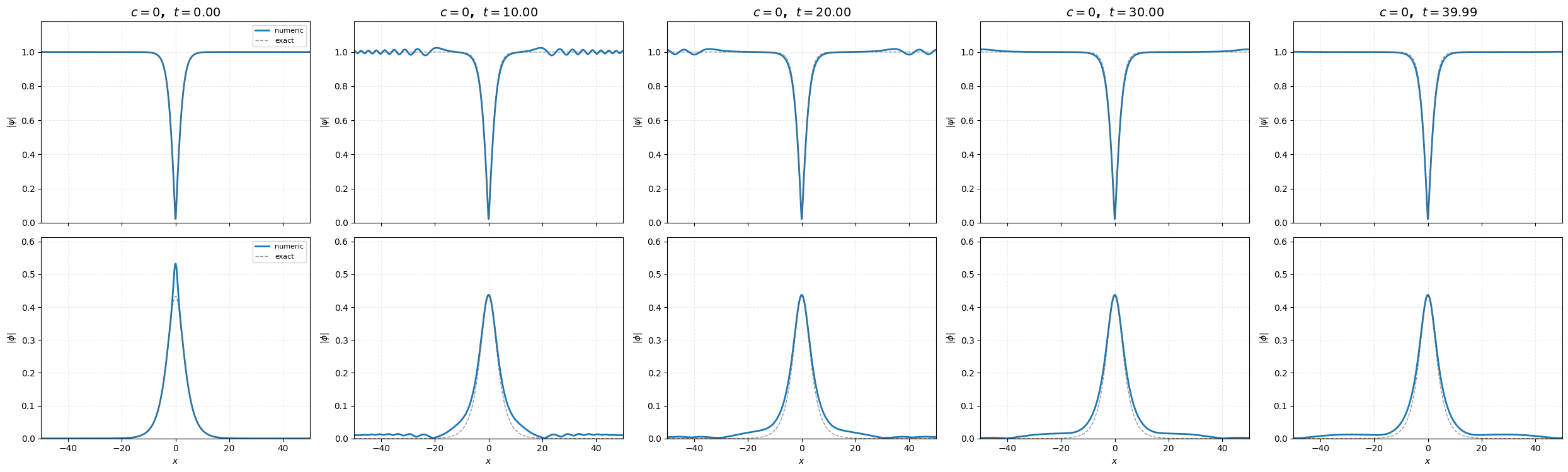}
    \caption{Numerical solution of the system for $\alpha = 4$, $\beta = 20$ and $c = 0$ with a Gaussian perturbation of the initial data, $u_i \exp\!\left(-x^{2}/\sigma^{2}\right)$, where $\sigma = 1$, $u_i = 0.5$ for $\psi$ and $u_i = 0.05$ for $\phi$. Solid: numerical solution; dashed: exact profile. Top: $|\psi|$; bottom: $|\phi|$.}
    \label{fig:gaussian_perturbation_2}

\end{figure}

\begin{figure}
    \centering
      {\large\textbf{Black--bright soliton, $\alpha = 4$, $\beta = 20$, $c = 0$}}\\[0.5em]
    \includegraphics[width=\textwidth]{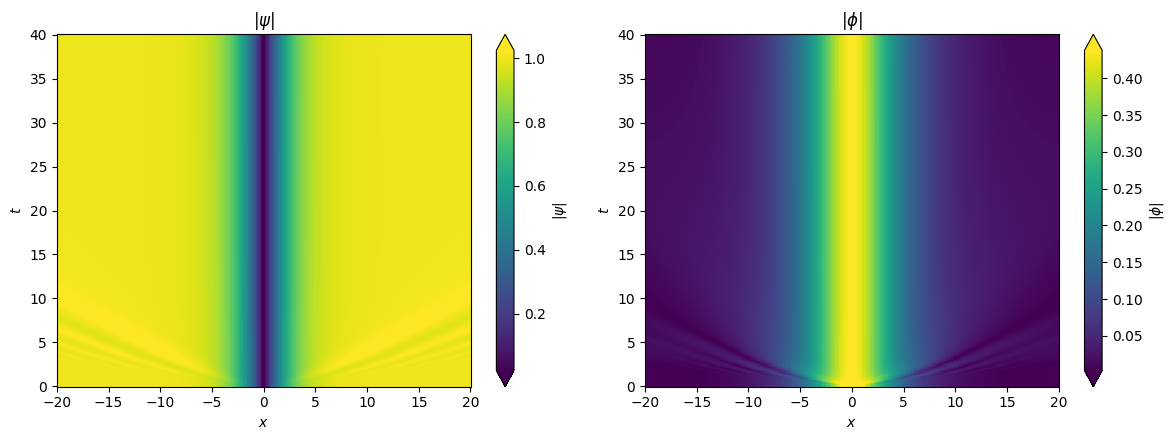}
    \caption{Numerical solution of the system for $\alpha = 4$, $\beta = 20$ and $c = 0$ with a Gaussian perturbation of the initial data, $u_i \exp\!\left(-x^{2}/\sigma^{2}\right)$, where $\sigma = 1$, $u_i = 0.5$ for $\psi$ and $u_i = 0.05$ for $\phi$. Left: $|\psi|$; right: $|\phi|$, as functions of $x$ and $t$.}
    \label{fig:dark_bright_alpha4_beta20_c0}
\end{figure}

\end{document}